%
\documentclass[12pt]{iopart}
\usepackage{amsthm}
\usepackage{iopams}
\usepackage{setstack}
\usepackage{graphics}
\usepackage{graphicx}
\usepackage{here}

\usepackage{color}
\usepackage[colorlinks]{hyperref}

\newcommand{\norm}[1]{\left\Vert#1\right\Vert}

\newcommand{\abs}[1]{\left\vert#1\right\vert}
\newcommand{\R}{{\rm I}\!{\rm R}} 

 \newtheorem{thm}{Theorem}[section]

 \newtheorem{prop}{Proposition}[section]
 \newtheorem{defn}{Definition}[section]
 \newtheorem{rem}{Remark}[section]


\begin{document}

\title[Generalized Qualification and Qualification Levels for SRM]{Generalized Qualification and Qualification Levels for Spectral Regularization Methods}

\author{T. Herdman, R. D. Spies and K. G. Temperini}

\footnote[0]{This work was supported by DARPA/SPO, NASA LaRC and the
National Institute of Aerospace under grant VT-03-1, 2535, and in
part by AFOSR Grants F49620-03-1-0243 and FA9550-07-1-0273, by
Consejo Nacional de Investigaciones Cient{\'i}ficas y T\'{e}cnicas,
CONICET, and by Universidad Nacional del Litoral, U.N.L., Argentina,
through project CAI+D 2006, P.E. 236.\\

T. Herdman, Interdisciplinary Center for Applied Mathematics,
ICAM, Virginia Tech, Blacksburg, VA 24061, USA.
E-mail: herdman@icam.vt.edu
\\

R. D. Spies, Instituto de Matem\'{a}tica Aplicada del Litoral, IMAL,
CONICET-UNL, G\"{u}emes 3450, S3000GLN, Santa Fe, Argentina.
Departamento de Matem\'{a}tica, Facultad de Ingenier\'{\i}a Qu\'{\i}mica, UNL,
Santa Fe, Argentina. E-mail: rspies@imalpde.santafe-conicet.gov.ar
\\

K. G. Temperini, IMAL, CONICET-UNL, G\"{u}emes 3450, S3000GLN, Santa
Fe, Argentina. Departamento de Matem\'{a}tica, Facultad de Humanidades
y Ciencias, UNL, Santa Fe, Argentina. \break E-mail:
ktemperini@sanatfe-conicet.gov.ar}

\begin{abstract}
The concept of qualification for spectral regularization methods
(SRM) for inverse ill-posed problems is strongly associated to the
optimal order of convergence of the regularization error
(\cite{bookEHN}, \cite{Mathe2004}, \cite{ref:Mathe-Pereverzev-2003},
\cite{Vainikko-1982}). In this article, the definition of
qualification is extended and three different levels are introduced:
weak, strong and optimal.
 It is shown that the weak qualification extends the
definition introduced by Math\'{e} and Pereverzev
(\cite{ref:Mathe-Pereverzev-2003}), mainly in the sense that the
functions associated to orders of convergence and source sets need
not be the same.  It is shown that certain methods possessing
infinite classical qualification, e.g. truncated singular value
decomposition (TSVD), Landweber's method and Showalter's method,
also have generalized qualification leading to an optimal order of
convergence of the regularization error.
Sufficient conditions for a SRM to have weak qualification are
provided and necessary and sufficient conditions for a given order
of convergence to be strong or optimal qualification are found.
Examples of all three qualification levels are provided and the
relationships between them as well as with the classical concept
of qualification  and the qualification introduced in
\cite{ref:Mathe-Pereverzev-2003} are shown. In particular, SRMs
having extended qualification in each one of the three levels and
having zero or infinite classical qualification are presented.
Finally several implications of this theory in the context of
orders of convergence, converse results and maximal source sets
for inverse ill-posed problems, are shown.
\end{abstract}
\noindent{\textbf{Keywords.}} Qualification, Regularization
method, Inverse ill-posed problem. \maketitle
\section{Introduction and preliminaries}

Let $X, Y$ be infinite dimensional Hilbert spaces and
$T:X\rightarrow Y$ a bounded linear operator. If $\mathcal{R}(T)$,
the range of $T$, is not closed it is well known that the linear
operator equation
\begin{equation}\label{eq:0}
    Tx=y
\end{equation}
is ill-posed, in the sense that $T^\dag$, the Moore-Penrose
generalized inverse of $T$, is not bounded \cite{bookEHN}. The
Moore-Penrose generalized inverse is strongly related to the
least-squares (LS) solutions of (\ref{eq:0}). In fact equation
(\ref{eq:0}) has a LS solution if and only if $y$ belongs to
$\mathcal{D}(T^\dag)$, the domain of $T^\dag$, which is defined as
$\mathcal{D}(T^\dag)\doteq \mathcal{R}(T)\oplus
\mathcal{R}(T)^\perp$. In that case, $x^\dag\doteq T^\dag y$ is
the best approximate solution (i.e. the LS solution of minimum
norm) and the set of all LS solutions of (\ref{eq:0}) is given by
$x^\dag + \mathcal{N}(T)$. If the problem is ill-posed, then
$x^\dag$ does not depend continuously on the data $y$. Hence if
instead of the exact data $y$, only an approximation $y^\delta$ is
available, with $\norm{y-y^\delta}\leq \delta$, where  $\delta>0$
is the noise level or observation error, then it is possible that
$T^\dag y^\delta$ does not exist or, if it exists, then it will
not necessarily be a good approximation of $x^\dag$, even if
$\delta$ is very small. This instability becomes evident when
trying to approximate $x^\dag$ by standard numerical methods and
procedures. Thus, for instance, except under rather restrictive
conditions (\cite{Luecke-Hickey-1985}, \cite{Vainikko-1985}), the
application of the standard LS approximations procedure on a
sequence $\{X_n\}$ of finite dimensional subspaces of $X$, whose
union is dense in $X$, will result in a sequence $\{x_n\}$ of LS
approximating solutions which does not converge to $x^\dag$ (see
\cite{ref:Seidman-80}). Moreover, this divergence can occur with
arbitrarily large speed (see \cite{Spies-Temperini-2006}).

Ill-posed problems must be regularized before pretending to
successfully attack the problem of numerically approximating their
solutions. Regularizing an ill-posed problem such as (\ref{eq:0})
essentially means approximating the operator $T^\dag$ by a
parametric family of continuous operators $\{R_\alpha\}$, where
$\alpha$ is called the regularization parameter. More precisely,
for $\alpha \in (0,\alpha_0)$ with $\alpha_0 \in (0,+\infty]$, let
$R_\alpha:Y \to X$ be a continuous (not necessarily linear)
operator. The set $\{R_\alpha\}_{\alpha \in (0,\alpha_0)}$ is said
to be a ``family of regularization operators'' (FRO) for $T^\dag$,
if for every $y \in \mathcal{D}(T^\dag)$, there exists a parameter
choice rule $\alpha=\alpha(\delta,y^\delta)$ such that
\begin{equation*}
    \underset{\delta \to 0^+}{\lim} \underset{\underset{\norm{y^\delta-y}\leq
    \delta}{y^\delta \in Y}}{\sup} \norm{R_{\alpha(\delta,y^\delta)}y^\delta-T^\dag
    y}=0.
\end{equation*}
Here the parameter choice rule $\alpha:\R^+ \times Y \to
(0,\alpha_0)$ is such that
\begin{equation*}
 \underset{\delta \to 0^+}{\lim} \underset{\underset{\norm{y^\delta-y}\leq
    \delta}{y^\delta \in Y}}{\sup} \alpha(\delta,y^\delta)=0.
\end{equation*}

If $y \in \mathcal{D}(T^\dag)$, then $x^\dag$ satisfies the normal
equation $(T^\ast T)x^\dag=T^\ast y$ and $x^\dag$ can be written
as
\begin{equation}\label{eq:int-inv}
x^\dag\doteq T^\dag y=\int_0^{\norm{T}^2+} \frac{1}{\lambda} \,
dE_\lambda T^\ast y,
\end{equation}
where $\{E_\lambda\}_{\lambda \in \R}$ is the spectral family
associated to the self-adjoint operator $T^\ast T$ (see
\cite{book:Dautray-Lions}, \cite{bookEHN}). However, since we are
assuming that $\mathcal{R}(T)$ is not closed (and therefore
$\mathcal{D}(T^\dag)\subsetneq Y$), if $y \notin
\mathcal{D}(T^\dag)$ then the integral in (\ref{eq:int-inv}) does
not exist since in that case  $0 \in \sigma(T^\ast T)$ and $\frac 1
\lambda$ has a pole at 0. Moreover in this case, the operator
$T^\dag$ defined in (\ref{eq:int-inv}) for $y \in
\mathcal{D}(T^\dag)$, is not bounded. For that reason, many
regularization methods are based on spectral theory and consist on
defining $R_\alpha\doteq \int_0^{\norm{T}^2+} g_\alpha(\lambda) \,
dE_\lambda\, T^\ast$ where $\{g_\alpha\}$ is a family of functions
appropriately defined such that for every $\lambda \in
(0,\norm{T}^2]$ there holds $\underset{\alpha \rightarrow
0^+}{\lim}g_\alpha(\lambda)=\frac{1}{\lambda}$.

Let $\{g_\alpha\}_{\alpha \in (0, \alpha_0)}$ be a parametric
family of functions $g_\alpha:[0,+\infty)\rightarrow \R$ defined
for all $\alpha \in (0,\alpha_0)$. We shall say that
$\{g_\alpha\}_{\alpha \in (0, \alpha_0)}$ is a ``spectral
regularization method'' (SRM), if it satisfies the following
hypotheses:

\textit{H1}.\; For every fixed $\alpha \in (0,\alpha_0),$
$g_\alpha(\lambda)$ is piecewise continuous with respect to
$\lambda$, for $\lambda \in [0,+\infty)$;

\textit{H2}.\; There exists a constant $C>0$ (independent of
$\alpha$) such that $\abs{\lambda g_\alpha (\lambda)}\leq C$ for
every $\lambda \in [0,+\infty)$;

\textit{H3}. \;For every $\lambda \in (0,+\infty)$,
$\underset{\alpha \rightarrow 0^+}{\lim}
g_\alpha(\lambda)=\frac{1}{\lambda}.$

It can be shown that if $\{g_\alpha\}_{\alpha \in (0, \alpha_0)}$
is a SRM then the family of operators $\{R_\alpha\}_{\alpha \in
(0, \alpha_0)}$ defined by
\begin{equation*} 
    R_\alpha\doteq \int g_\alpha(\lambda)\, dE_\lambda \,T^\ast
    =g_\alpha(T^\ast T)T^\ast,
\end{equation*}
is a FRO for $T^\dag$ (\cite{bookEHN}, Theorem 4.1). In this case
we shall say that $\{R_\alpha\}_{\alpha \in (0, \alpha_0)}$ is a
``spectral regularization family" for $T^\dag$. The use of this
terminology has to do with the fact that each one of its elements
is defined in terms of an integral with respect to the spectral
family $\{E_\lambda\}_{\lambda\in \R}$ associated to the operator
$T^\ast T$. Note that given the operator $T$, it is sufficient
that $g_\alpha(\lambda)$ be defined for $\lambda \in
[0,\norm{T}^2]$, since $E_\lambda$ is ``constant" outside that
interval.

It is well known that for ill-posed problems it is not possible to
reconstruct the exact solution $x^\dag$ with any degree of
accuracy unless additional \textit{a-priori} information about
$x^\dag$ is available (\cite{Spies-Temperini-2006}, \cite{bookEHN}
Proposition 3.11). On the other hand, given certain
\textit{a-priori} information about $x^\dag$, it could be
desirable to know the best order of convergence (of the
regularization error $\norm{R_\alpha y - x^\dag}$ as a function of
the regularization parameter $\alpha$, or of the total error
$\norm{R_\alpha y^\delta - x^\dag}$ as a function of the noise
level $\delta$), that can be achieved with a regularization method
under those \textit{a-priori} assumptions. Conversely, given an
order of convergence, one could be interested in determining the
possible existence of ``source sets" on which a certain
regularization method reaches that order of convergence. In this
case it could further be of interest to determine ``maximal source
sets". All these problems are strongly related to the concepts of
qualification and saturation of a regularization method
(\cite{bookEHN}, \cite{Engl-Kunisch-Neubauer-1989},
\cite{Mathe2004}, \cite{ref:Mathe-Pereverzev-2003},
\cite{Neubauer94}, \cite{ref:Neubauer-97}).

In \cite{Vainikko-1982} the notion of qualification of a
regularization method was introduced for the first time and the
decisive role of this concept in relation to the order of
convergence of the regularization error was shown. In the sequel,
we shall simply denote with $\{g_\alpha\}$ the SRM
$\{g_\alpha\}_{\alpha \in (0,\alpha_0)}$. We now recall the
definition of classical qualification for SRMs (see
\cite{bookEHN}).

\begin{defn}\label{def:calif-clasica}
Let $\{g_\alpha\}$ be a SRM and denote with
$\mathcal{I}(g_\alpha)$ the set
$$\mathcal{I}(g_\alpha)\doteq\{\mu\geq 0:\, \forall\, \lambda \in
[0,+\infty),\exists\, k>0 \textrm{ such that
}\lambda^\mu\abs{1-\lambda g_\alpha(\lambda)}\leq k\,\alpha^\mu\,
, \forall\, \alpha \in (0,\alpha_0)\}$$ and let $\mu_0 \doteq
\underset{\mu \in \mathcal{I}(g_\alpha)}{\sup}\, \mu$. If $
0<\mu_0<+\infty$, we say that $\{g_\alpha\}$ has classical
qualification and in that case the number $\mu_0$ is called
``order" of the classical qualification.
\end{defn}

\begin{rem}
Note that $0 \in \mathcal{I}(g_\alpha)$ by virtue of \textit{H2}
and therefore $\mathcal{I}(g_\alpha)$ is always nonempty.
\end{rem}

In \cite{ref:Mathe-Pereverzev-2003} Math\'{e} and Pereverzev first
introduced the following definition of qualification for a
spectral regularization method, formalizing and extending the
classical notion of the concept.

\begin{defn}\label{def:calif-mathe}
Let $\rho:(0,a]\rightarrow (0,\infty)$ be an increasing function.
It is said that the regularization method $\{g_\alpha\}$ has
qualification $\rho$ if there exists a constant $\gamma \in
(0,\infty)$ such that
\begin{equation}\label{eq:calif-mathe}
    \underset{\lambda\in (0,a]}{\sup}\abs{1-\lambda
    g_\alpha(\lambda)}\rho(\lambda)\leq \gamma \,\rho(\alpha)\quad
    \forall \;\alpha \in (0,a].
\end{equation}
\end{defn}

\bigskip

In this article we generalize the previous concept, mainly by
allowing the function $\rho(\lambda)$ appearing in the left hand
side of (\ref{eq:calif-mathe}) to be substituted by a general
function $s(\lambda)$ with similar properties.

\begin{rem}
It is important to point out that in \cite{bookEHN} the
``classical qualification" of a method was defined to be the
number $\mu_0$ in Definition \ref{def:calif-clasica} (even in the
case $\mu_0=\infty$). However, from our point of view the
``generalized qualification"  of a method will not be a number but
rather a function of the regularization parameter $\alpha$ as an
order of convergence in the sense of Definition
\ref{def:calif-mathe}. In the case of SRMs with classical
qualification of positive finite order $\mu_0$, the corresponding
generalized qualification will be shown to be the function
$\rho(\alpha)=\alpha^{\mu_0}$, coinciding with the classical
approach. Since in the extreme cases $\mu_0=0$ and $\mu_0=\infty$
that function does not define an order of convergence, we have
preferred to exclude them from the definition of classical
qualification (Definition \ref{def:calif-clasica}) and,
accordingly, we shall say that the method does not have classical
qualification.

\end{rem}

The organization of this article is as follows. In Section 2 the
concepts of weak and strong source-order pair and of order-source
pair are defined and three qualification levels for SRM are
introduced: weak, strong and optimal. A sufficient condition for
the existence of weak qualification is provided and necessary and
sufficient conditions for an order of convergence to be strong or
optimal qualification are given. In Section 3, examples of all
qualification levels are provided and the relationships between
them and with the classical qualification and the qualification
introduced in \cite{ref:Mathe-Pereverzev-2003} are shown. In
particular, SRMs having qualification in each one of the three
levels and not having classical qualification are presented.
Finally  several implications of this theory in the context of
orders of convergence, converse results and maximal source sets
for inverse ill-posed problems are shown in Section 4.


\section{Source-order and order-source pairs. Generalized qualification and qualification levels.}

 It is well known that there exist SRMs for which the corresponding
$\mu_0$ given in Definition \ref{def:calif-clasica} is infinity,
e.g.{\;\,}truncated singular value decomposition (TSVD),
Landweber's method and Showalter's method. However, a careful
analysis leads to observe that the concept of qualification as
optimal order of convergence of the regularization error remains
alive underlying most of these and many other methods. In this
section we generalize the definition of qualification introduced
by Math\'{e}-Pereverzev in \cite{ref:Mathe-Pereverzev-2003} and
thereby the notion of classical qualification of a SRM. Also three
different levels of qualification are introduced: weak, strong and
optimal. These levels introduce natural hierarchical categories
for the SRMs and we show that the generalized qualification
corresponds to the lowest of these levels. Moreover, a sufficient
condition which guarantees that a SRM possesses qualification in
the sense of this generalization is provided and necessary and
sufficient conditions for a given order of convergence to be
strong or optimal qualification are found.

We denote with $\mathcal{O}$ the set of all non decreasing
functions $\rho:\R^+ \to \R^+$ such that $\underset{\alpha
\rightarrow 0^+}{\lim}\rho(\alpha)=0$ and with $\mathcal{S}$ the
set of all continuous functions $s:\R^+_0 \to \R^+_0$ satisfying
$s(0)=0$ and such that $s(\lambda)>0$ for every $\lambda>0.$ If
moreover $s$ is increasing, then it is an \textit{index function}
in the sense of Math\'{e}-Pereverzev
(\cite{ref:Mathe-Pereverzev-2003}).

\begin{defn}
Let $\rho,\tilde{\rho}\in \mathcal{O}$. We say that ``$\rho$
precedes $\tilde{\rho}$ at the origin'' and we denote it with
$\rho \preceq \tilde{\rho}$, if there exist positive constants $c$
and $\varepsilon$ such that $\rho(\alpha)\leq
c\,\tilde{\rho}(\alpha)$ for every $\alpha \in (0,\varepsilon)$.
\end{defn}

\begin{defn} \label{def:order}
Let $\rho,\tilde{\rho}\in \mathcal{O}$. We say that ``$\rho$ and
$\tilde{\rho}$ are equivalent at the origin'' and we denote it
with $\rho \approx\tilde{\rho}$, if they precede each other at the
origin, that is, if there exist constants $\varepsilon, c_1, c_2$,
$\varepsilon>0$, $0<c_1<c_2<\infty$ such that $c_1\,
\rho(\alpha)\leq \tilde{\rho}(\alpha) \leq c_2\, \rho(\alpha)$ for
every $\alpha \in (0,\varepsilon)$.
\end{defn}

Clearly, ``$\approx$" introduces an order of equivalence in
$\mathcal{O}$. Analogous definitions and notation will be used for
$s,\tilde{s} \in \mathcal{S}$.

\begin{defn}
Let $\{g_\alpha\}$ be a SRM, $r_{\alpha}(\lambda)\doteq 1-\lambda
g_{\alpha}(\lambda)$, $\rho\in \mathcal{O}$ and $s \in
\mathcal{S}.$

\textbf{i)} We say that $(s,\rho)$ is a ``weak source-order pair for
$\{g_\alpha\}$'' if it satisfies
\begin{equation}\label{eq:O}
\frac{s(\lambda)\abs{r_\alpha(\lambda)}}{\rho(\alpha)}=O(1)\quad
\textrm{for}\;\; \alpha \rightarrow 0^+, \; \forall\; \lambda>0.
\end{equation}

\textbf{ii)} We say that $(s, \rho)$ is a ``strong source-order
pair for $\{g_\alpha\}$'' if it is a weak source-order pair and
there is no $\lambda>0$ for which $O(1)$ in {\rm(\ref{eq:O})} can
be replaced by $o(1)$. That is, if {\rm(\ref{eq:O})} holds and
also
\begin{equation}\label{eq:no-o}
\underset{\alpha \rightarrow
0^+}{\limsup}\,\frac{s(\lambda)\abs{r_\alpha(\lambda)}}{\rho(\alpha)}>0\quad
\forall\; \lambda>0.
\end{equation}

\textbf{iii)} We say that $(\rho, s)$ is an ``order-source pair
for $\{g_\alpha\}$'' if there exist a constant $\gamma>0$ and a
function $h:(0,\alpha_0)\rightarrow \R^+$ with $\underset{\alpha
\rightarrow0^+}{\lim}h(\alpha)=0$, such that
\begin{equation}\label{eq:4.50}
\frac{s(\lambda)\abs{r_\alpha(\lambda)}}{\rho(\alpha)}\geq \gamma
\quad \forall\; \lambda \in [h(\alpha),+\infty).
\end{equation}
\end{defn}

In the previous definitions we shall refer to the function $\rho$
as the ``order of convergence" and to $s$ as the ``source
function". The reason for using this terminology will become clear
in Section 4 when we shall see applications of these concepts in
the context of direct and converse results for regularization
methods.

The following observations follow immediately from the
definitions.
\begin{enumerate}

\item If $(s,\rho)$ is a weak source-order pair for
$\{g_\alpha\}$ which is not a strong source-order pair, then there
exists $\lambda_0>0$ such that $\underset{\alpha \rightarrow
0^+}{\limsup}\,\frac{s(\lambda_0)\abs{r_\alpha(\lambda_0)}}{\rho(\alpha)}=0$
and therefore $(\rho,s)$ cannot be an order-source pair for
$\{g_\alpha\}$. Thus  if $(\rho, s)$ is an order-source pair and
$(s,\rho)$ is a weak source-order pair, then $(s,\rho)$ is further
a strong source-order pair in the sense of \textbf{\textit{ii})}.

\item Let $\rho,\tilde{\rho}\in \mathcal{O}$.
\begin{enumerate}
\item  If $(s,\rho)$ is a weak source-order pair for
$\{g_\alpha\}$ and $\rho \preceq \tilde{\rho}$ then
$(s,\tilde{\rho})$ is also a weak source-order pair for
$\{g_\alpha\}$.
\item If  $(s,\rho)$ is a weak source-order pair for $\{g_\alpha\}$
and $\tilde{s}\in \mathcal{S}$ is such that there exists $c>0$ for
which $\tilde{s}(\lambda)\leq c\,s(\lambda)$ for every
$\lambda>0$, then $(\tilde{s},\rho)$ is also a weak source-order
pair for $\{g_\alpha\}$.
\end{enumerate}
\end{enumerate}

In the following definition we introduce the concept of
generalized qualification and three different levels of it.

\begin{defn}\label{def:calif-3} Let $\{g_\alpha\}$ be a SRM.

\textbf{i)} We say that $\rho$ is ``weak or generalized
qualification of $\{g_\alpha\}$'' if there exists a function $s$
such that $(s,\rho)$ is a weak source-order pair for
$\{g_\alpha\}$.

\textbf{ii)} We say that $\rho$ is ``strong qualification of
$\{g_\alpha\}$'' if there exists a function $s$ such that
$(s,\rho)$ is a strong source-order pair for $\{g_\alpha\}$.

\textbf{iii)} We say that $\rho$ is ``optimal qualification of
$\{g_\alpha\}$'' if there exists a function $s$ such that
$(s,\rho)$ is a strong source-order pair for $\{g_\alpha\}$ (it is
sufficient that $(s,\rho)$ be a weak source-order pair) and
$(\rho, s)$ is an order-source pair for $\{g_\alpha\}$.
\end{defn}

It is important to observe that weak qualification generalizes the
concept of qualification introduced by Math\'{e} and Pereverzev in
\cite{ref:Mathe-Pereverzev-2003} and therefore, the notion of
classical qualification. In fact, if $\{g_\alpha\}$ has continuous
qualification $\rho(\alpha)$ in the sense of Definition
\ref{def:calif-mathe} and $\underset{\alpha \rightarrow
0^+}{\lim}\rho(\alpha)=0$, then the function
\begin{equation}
\tilde{\rho}(\alpha)\doteq \left\{%
\begin{array}{ll}
    0, & \hbox{si $\alpha=0$;} \\
    \rho(\alpha), & \hbox{si $0<\alpha\leq a$;} \\
    \rho(a), & \hbox{si $\alpha > a$.} \\
\end{array}%
\right.
\end{equation}
is weak qualification of $\{g_\alpha\}$. However, these two
notions are not equivalent. We shall see later on that it is
possible for a function to be weak qualification of a SRM and not
be qualification according to Definition \ref{def:calif-mathe}
(see comments at the end of Section 3).

It is timely to note here that if $\{g_\alpha\}$ has classical
qualification of order $\mu_0$, then $\rho(\alpha)=\alpha^\mu$ is
weak qualification of $\{g_\alpha\}$ and moreover
$(\lambda^{\mu},\alpha^{\mu})$ is a weak source-order pair for
$\{g_\alpha\}$ for every $\mu\in (0,\mu_0]$. Conversely, if for
$\mu>0$, $(\lambda^{\mu},\alpha^{\mu})$ is a weak source-order
pair for $\{g_\alpha\}$, then this method has classical
qualification (of order $\mu_0 \geq \mu$) provided that
$\mu_0\doteq \sup$ {\Large\{}$\mu: (\lambda^{\mu},\alpha^{\mu})$
is a weak source-order pair for
$\{g_\alpha\}${\Large\}}$<+\infty$.

The following result provides a sufficient condition for the
existence of weak qualification of a SRM.

\begin{thm} \label{teo:cond-caldebil}
Let $\{g_\alpha\}$ be a SRM such that for every fixed $\lambda>0$,
$g_\alpha(\lambda)$ is decreasing in $\alpha$, for $\alpha \in
(0,\alpha_0)$.

\textbf{a)} If there exist an increasing function
$h:(0,\alpha_0)\rightarrow \R^+$ with $\underset{\alpha
\rightarrow0^+}{\lim}h(\alpha)=0$, $\rho^\ast \in \mathcal{O}$ and
$\varepsilon>0$ such that for every $\alpha \in (0,\varepsilon),$
\begin{equation}\label{eq:cond-califdebil}
\underset{\lambda \in
[h(\alpha),+\infty)}{\sup}\abs{r_\alpha(\lambda)}\leq \rho^\ast
(\alpha),
\end{equation}
then $\{g_\alpha\}$ has weak qualification and in that case
$\rho^\ast$ is weak qualification of the method.

\textbf{b)} If for every $\alpha \in (0,\alpha_0)$,
$r_\alpha(\lambda)$ is positive and monotone decreasing for
$\lambda \in (0,+\infty)$, then it is always possible to find $h$
and $\rho^\ast$ as in \textbf{a)} satisfying \text{\rm
(\ref{eq:cond-califdebil})} for all $\alpha \in (0,\alpha_0)$.

\end{thm}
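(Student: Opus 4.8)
The plan is to prove the two parts separately, treating part (a) as essentially a direct verification that $\rho^\ast$ serves as a weak qualification, and part (b) as a constructive argument producing the needed $h$ and $\rho^\ast$.

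For part (a), the goal is to exhibit a source function $s \in \mathcal{S}$ such that $(s,\rho^\ast)$ is a weak source-order pair, i.e. such that $s(\lambda)\abs{r_\alpha(\lambda)}/\rho^\ast(\alpha) = O(1)$ as $\alpha \to 0^+$ for each fixed $\lambda > 0$. The natural candidate is simply $s(\lambda) = \lambda$ (or any fixed bounded-below-near-$\lambda$ index function); let me sketch why $s(\lambda)=\lambda$ should work. Fix $\lambda > 0$. Since $h$ is increasing with $\lim_{\alpha\to 0^+} h(\alpha) = 0$, there exists $\alpha_\lambda \in (0,\varepsilon)$ small enough that $h(\alpha) \leq \lambda$ for all $\alpha \in (0,\alpha_\lambda)$. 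Then for such $\alpha$ we have $\lambda \in [h(\alpha),+\infty)$, so hypothesis (\ref{eq:cond-califdebil}) gives $\abs{r_\alpha(\lambda)} \leq \sup_{\mu \in [h(\alpha),+\infty)}\abs{r_\alpha(\mu)} \leq \rho^\ast(\alpha)$. Hence $\lambda\,\abs{r_\alpha(\lambda)}/\rho^\ast(\alpha) \leq \lambda$, which is a constant (in $\alpha$), establishing the $O(1)$ bound for $\alpha \to 0^+$. Thus $(s,\rho^\ast)$ with $s(\lambda)=\lambda$ is a weak source-order pair, and by Definition \ref{def:calif-3}\,\textbf{i)}, $\rho^\ast$ is weak qualification. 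I should double-check that $\rho^\ast \in \mathcal{O}$ is already assumed, so no further work is needed to certify the order function itself; the monotonicity-in-$\alpha$ hypothesis on $g_\alpha$ is not even needed for part (a), which suggests it is there to support part (b).

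For part (b), the task is constructive: given that $r_\alpha(\lambda)$ is positive and monotone decreasing in $\lambda$ on $(0,+\infty)$ for each fixed $\alpha$, I must manufacture an increasing $h$ with $\lim_{\alpha\to0^+}h(\alpha)=0$ and a $\rho^\ast \in \mathcal{O}$ satisfying (\ref{eq:cond-califdebil}) for all $\alpha \in (0,\alpha_0)$. Because $r_\alpha$ is decreasing in $\lambda$, the supremum of $\abs{r_\alpha(\lambda)}=r_\alpha(\lambda)$ over $[h(\alpha),+\infty)$ is attained (as a $\sup$) at the left endpoint, i.e. $\sup_{\lambda\geq h(\alpha)} r_\alpha(\lambda) = r_\alpha(h(\alpha))$. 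So (\ref{eq:cond-califdebil}) reduces to the clean requirement $r_\alpha(h(\alpha)) \leq \rho^\ast(\alpha)$. The plan is therefore to pick any convenient increasing $h$ tending to $0$ (for instance $h(\alpha)=\alpha$, assuming $\alpha_0$ can be taken so this lands in the domain, or more safely $h(\alpha)=\min(\alpha,\lambda_1)$ for a fixed reference point) and then to \emph{define} $\rho^\ast(\alpha) \doteq r_\alpha(h(\alpha))$, possibly after adjusting it to guarantee membership in $\mathcal{O}$.

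The main obstacle is exactly this last membership check: I must verify that the constructed $\rho^\ast$ lies in $\mathcal{O}$, meaning it is nondecreasing in $\alpha$ and satisfies $\lim_{\alpha\to0^+}\rho^\ast(\alpha)=0$. The limit condition is where the monotonicity-in-$\alpha$ hypothesis on $g_\alpha$ (hence on $r_\alpha(\lambda)=1-\lambda g_\alpha(\lambda)$, which is then \emph{increasing} in $\alpha$ for fixed $\lambda$) together with \textit{H3} should come into play: as $\alpha \to 0^+$, $g_\alpha(\lambda)\to 1/\lambda$ so $r_\alpha(\lambda)\to 0$ pointwise, and with $h(\alpha)\to 0$ one must control $r_\alpha(h(\alpha))$ along the diagonal. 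I would argue by fixing a comparison point: for any $\eta>0$, once $\alpha$ is small enough that $h(\alpha) \geq$ some threshold is \emph{not} the right direction since $h(\alpha)\to0$; instead I would bound $r_\alpha(h(\alpha)) \leq r_\alpha(\lambda')$ is false because $h(\alpha)<\lambda'$ makes $r_\alpha(h(\alpha))$ larger. So the delicate point is that both arguments of $r_\alpha(h(\alpha))$ degenerate simultaneously, and I expect to need the monotonicity in $\alpha$ to write $r_\alpha(h(\alpha)) \leq r_{\alpha'}(h(\alpha))$ for $\alpha \leq \alpha'$ and then exploit that $h$ can be chosen to tend to $0$ slowly enough (a diagonal/majorization argument) to force the limit to vanish; alternatively, one redefines $\rho^\ast$ as a nondecreasing majorant of $\alpha \mapsto r_\alpha(h(\alpha))$ that still tends to $0$. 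Ensuring $\rho^\ast$ is nondecreasing while keeping the bound (\ref{eq:cond-califdebil}) and the limit is the crux, and I anticipate spending most of the effort there, likely by first establishing monotonicity of $\alpha\mapsto r_\alpha(h(\alpha))$ directly from the two monotonicity hypotheses and then verifying the limit through \textit{H3}.
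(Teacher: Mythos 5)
Your part (a) is correct, and in fact tidier than the paper's own argument. The paper splits into two cases according to whether $\underset{\lambda \in [h(\alpha),+\infty)}{\sup}\abs{r_\alpha(\lambda)}$ is positive or zero, because it estimates $\abs{r_\alpha(\lambda_0)}/\rho^\ast(\alpha)$ by dividing by that supremum; your chain $\abs{r_\alpha(\lambda)}\leq \underset{\mu \geq h(\alpha)}{\sup}\abs{r_\alpha(\mu)}\leq \rho^\ast(\alpha)$, valid as soon as $h(\alpha)\leq \lambda$, avoids the division and hence the case distinction, and any fixed $s\in\mathcal{S}$ (bounded or not) then works since the $O(1)$ constant may depend on $\lambda$. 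You are also right that the monotonicity of $g_\alpha$ in $\alpha$ is not needed in (a).

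Part (b) is where the genuine gap lies: your proposal correctly reduces (\ref{eq:cond-califdebil}) to $r_\alpha(h(\alpha))\leq\rho^\ast(\alpha)$ and correctly identifies the crux (forcing $r_\alpha(h(\alpha))\to 0$), but it does not resolve it, and your concrete suggestion $h(\alpha)=\alpha$ demonstrably fails. Take Tikhonov--Phillips, $g_\alpha(\lambda)=1/(\lambda+\alpha)$, which satisfies every hypothesis of (b): then $r_\alpha(\lambda)=\alpha/(\alpha+\lambda)$ and $\underset{\lambda\geq\alpha}{\sup}\,r_\alpha(\lambda)=r_\alpha(\alpha)=1/2$ for all $\alpha$, so no $\rho^\ast\in\mathcal{O}$ (which must tend to $0$) can satisfy (\ref{eq:cond-califdebil}) with this $h$. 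So $h$ cannot be chosen generically; it must be manufactured from the method itself, tending to $0$ slowly enough relative to the decay of $r_\alpha$. Your fallback hope of proving monotonicity of $\alpha\mapsto r_\alpha(h(\alpha))$ from the two monotonicity hypotheses also does not go through: for $\alpha_1<\alpha_2$ they give $r_{\alpha_1}(h(\alpha_1))\geq r_{\alpha_1}(h(\alpha_2))$ and $r_{\alpha_2}(h(\alpha_2))\geq r_{\alpha_1}(h(\alpha_2))$, i.e.\ both diagonal values dominate the same cross term, which yields no comparison between them (and monotonicity is not really needed, since once $z(\alpha)\doteq r_\alpha(h(\alpha))\to 0$ one can pass to the nondecreasing majorant $\underset{\beta\leq\alpha}{\sup}\,z(\beta)$).

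The missing idea is the adapted construction of $h$, which is the heart of the paper's proof: set $\theta(\lambda)\doteq \sup\{\gamma\in(0,\alpha_0):\, r_\alpha(\lambda)\leq\lambda\ \forall\,\alpha\in(0,\gamma)\}$ (positive by \textit{H3}), and $f(\lambda)\doteq(1-e^{-\lambda})\theta(\lambda)$, which is strictly increasing (because $r_\alpha$ is decreasing in $\lambda$, $\theta$ is nondecreasing) with $f(\lambda)\to 0$ as $\lambda\to 0^+$; after making $f$ continuous one takes $h$ to be an extension of $f^{-1}$. Since $f(\lambda)<\theta(\lambda)$, the definition of $\theta$ gives $r_{f(\lambda)}(\lambda)\leq\lambda$, i.e.\ $z(\alpha)=r_\alpha(h(\alpha))\leq h(\alpha)\to 0$, and the majorant argument finishes. (An equivalent elementary route: by \textit{H3} choose $\alpha_n\downarrow 0$ with $r_\alpha(1/n)\leq 1/n$ for all $\alpha\leq\alpha_n$, and let $h\equiv 1/n$ on $(\alpha_{n+1},\alpha_n]$; then $r_\alpha(h(\alpha))\leq 1/n$ there.) Without some such construction, your part (b) remains a plan rather than a proof.
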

\begin{proof}
\textbf{a)} Let $h:(0,\alpha_0)\rightarrow \R^+$ be an increasing
function with $\underset{\alpha \rightarrow0^+}{\lim}h(\alpha)=0$,
$\rho^\ast \in \mathcal{O}$ and $\varepsilon>0$ such that for
every $\alpha \in (0,\varepsilon)$ condition
(\ref{eq:cond-califdebil}) holds.

\underline{Case I}: there exists $\tilde{\alpha}\in
(0,\varepsilon)$ such that $\underset{\lambda \in \,
[h(\tilde{\alpha}),\,+\infty)}{\sup}\abs{r_{\tilde{\alpha}}(\lambda)}>0$.

Since $h(\alpha)$ is increasing, it follows that
$\underset{\lambda \in \,
[h(\alpha),\,+\infty)}{\sup}\abs{r_{\alpha}(\lambda)}>0$ for every
$\alpha \in (0,\tilde{\alpha}]$. Let $\lambda_0>0$. Then for every
$\alpha \in (0,\tilde{\alpha}]$,
\begin{equation}\label{eq:12}
    \frac{\abs{r_\alpha(\lambda_0)}}{\rho^\ast(\alpha)}\leq  \frac{\abs{r_\alpha(\lambda_0)}}{\underset{\lambda \in
[h(\alpha),+\infty)}{\sup}\abs{r_\alpha(\lambda)}}.
\end{equation}
Since $\underset{\alpha \rightarrow0^+}{\lim}h(\alpha)=0$, there
exists $\alpha^\ast \in (0,\tilde{\alpha})$ such that $\lambda_0
\in [h(\alpha),+\infty)$ for every $\alpha \in (0,\alpha^\ast],$
from which it follows that for every $\alpha \in (0,\alpha^\ast]$,

\begin{equation}\label{eq:11}
\frac{\abs{r_\alpha(\lambda_0)}}{\underset{\lambda \in
[h(\alpha),+\infty)}{\sup}\abs{r_\alpha(\lambda)}}\leq 1.
\end{equation}
>From (\ref{eq:12}) and (\ref{eq:11}) it follows that for every
$\lambda_0>0$
$$\underset{\alpha \to
0^+}{\limsup}\frac{\abs{r_\alpha(\lambda_0)}}{\rho^\ast(\alpha)}\leq
1.$$ Then, for any bounded $s \in \mathcal{S}$  the pair
$(s,\rho^\ast)$ satisfies (\ref{eq:O}), i.e., it is a weak
source-order pair for $\{g_\alpha\}$. Thus  we have proved that
$\rho^\ast$ is weak qualification of $\{g_\alpha\}$.

\underline{Case II}: $\underset{\lambda \in
[h(\alpha),+\infty)}{\sup}\abs{r_\alpha(\lambda)}=0$ for every
$\alpha \in (0,\varepsilon)$.

Let $\lambda_0>0$. Since $\underset{\alpha
\rightarrow0^+}{\lim}h(\alpha)=0$, there exists $\alpha^\ast \in
(0,\varepsilon)$ such that $\lambda_0 \in [h(\alpha),+\infty)$ for
every $\alpha \in (0,\alpha^\ast].$ Then
$\abs{r_\alpha(\lambda_0)}\leq \underset{\lambda \in
[h(\alpha),+\infty)}{\sup}\abs{r_\alpha(\lambda)}=0$ for every
$\alpha \in (0,\alpha^\ast)$, from what it follows that
$r_\alpha(\lambda_0)=0$. Then, for any $s \in \mathcal{S}$,
\begin{equation*}
    \frac{s(\lambda_0)r_\alpha(\lambda_0)}{\rho^\ast(\alpha)}=0
    \quad \textrm{for all} \quad \alpha \in (0,\alpha^\ast).
\end{equation*}
Therefore, $(s,\rho^\ast)$ is a weak source-order pair for
$\{g_\alpha\}$, which implies that $\rho^\ast$ is weak
qualification of $\{g_\alpha\}$. (Note that in this case any
$\rho^\ast \in \mathcal{O}$ is weak qualification of
$\{g_\alpha\}$.)

\textbf{b)} Let $\{g_\alpha\}$ be a SRM such that for every
$\alpha \in (0,\alpha_0)$, $r_\alpha(\lambda)$ is positive and
monotone decreasing for $\lambda \in (0,+\infty)$. For $\lambda>0$
we define $ f(\lambda)\doteq (1-e^{-\lambda})\theta(\lambda)$,
where
\begin{equation*}
    \theta(\lambda)\doteq \sup \{\gamma \in (0,\alpha_0):r_\alpha(\lambda)\leq \lambda \;\forall \; \alpha \in
    (0,\gamma)\}.
\end{equation*}
Since for every $\lambda>0$, $\underset{\alpha\to
0^+}{\lim}r_\alpha(\lambda)=0$, it follows that given $\lambda>0$
there exists $\gamma=\gamma(\lambda)>0$ such that
$r_\alpha(\lambda)\leq \lambda$ for every $\alpha \in (0,\gamma)$.
Then $\theta(\lambda)\neq -\infty$, moreover $\theta(\lambda)\in
(0,\alpha_0]$ for every $\lambda>0$ and therefore, $f(\lambda) \in
(0,\alpha_0)$ for every $\lambda>0$. On the other hand, since for
every $\alpha \in (0,\alpha_0)$, $r_\alpha(\lambda)$ is decreasing
for $\lambda>0$, it follows immediately that $f$ is strictly
increasing. Furthermore, since $f$ is bounded, it has countably
many jump discontinuity points. Therefore, it is possible to
assume, without loss of generality, that $f$ is continuous (since,
if it is not, we can redefine it in such a way that it be
continuous, by subtracting the jumps at the discontinuity points).

Thus  $f:\R^+ \to (0,\alpha_0)$ is continuous, strictly increasing
with $\underset{\lambda \rightarrow 0^+}{\lim}f(\lambda)=0$.
Therefore, its inverse function $f^{-1}$ exists over the range of
$f$ and it is strictly increasing and continuous with
$\underset{\alpha \rightarrow0^+}{\lim}f^{-1}(\alpha)=0$. It is
possible to extend $f^{-1}$ to $(0,\alpha_0)$ in such a way that
it preserves all these properties. We shall denote with $h$ this
extension.

For $\alpha \in (0,\alpha_0)$, we define $z(\alpha)\doteq
\underset{\lambda \in
[h(\alpha),+\infty)}{\sup}\abs{r_\alpha(\lambda)}=r_\alpha(h(\alpha)).$
Since for every $\alpha \in (0,\alpha_0)$, $r_\alpha(\lambda)$ is
positive for all $\lambda>0$, it follows that $z(\alpha)$ is also
positive. Since for every $\lambda>0$, $f(\lambda)<
\theta(\lambda)$, the definition of $\theta(\lambda)$ implies that
$r_{f(\lambda)}(\lambda)\leq \lambda$ for every $\lambda>0$, or
equivalently, $r_\alpha(h(\alpha))\leq h(\alpha)$ for every
$\alpha \in (0,\alpha_0)$. Then $0<z(\alpha)\leq h(\alpha)$ for
every $\alpha \in (0,\alpha_0)$ and the fact that
$\underset{\alpha \rightarrow0^+}{\lim}h(\alpha)=0$ implies that
$\underset{\alpha \rightarrow0^+}{\lim}z(\alpha)=0$. If further
$z$ is a non decreasing function, then $z \in \mathcal{O}$ and it
suffices to define $\rho^\ast\doteq z$. On the contrary, since $z$
is bounded and positive with $\underset{\alpha
\rightarrow0^+}{\lim}z(\alpha)=0$, there always exists a function
$\rho^\ast \in \mathcal{O}$ such that $z(\alpha)\leq
\rho^\ast(\alpha)$ for every  $\alpha \in (0,\alpha_0)$, as we
wanted to show.
\end{proof}

>From the previous Theorem, it follows that the SRMs
 $\{g_\alpha\}$ such that for every $\lambda>0$,
$g_\alpha(\lambda)$ is decreasing for $\alpha \in (0,\alpha_0)$
and for every $\alpha \in (0,\alpha_0)$, $r_\alpha(\lambda)$ is
positive and decreasing for $\lambda>0$, do possess weak
qualification. It is important to observe that most of the usual
SRMs do in fact satisfy these conditions. In particular this is so
for Landweber's and Showalter's methods.

Now  given the SRM $\{g_\alpha\}$ and $\rho\in \mathcal{O}$, we
define
\begin{equation}\label{s-rho}
s_\rho(\lambda)\doteq  \underset{\alpha \to 0^+}{\liminf}
\frac{\rho(\alpha)}{\abs{r_\alpha(\lambda)}}\quad \textrm{for}
\quad \lambda \geq 0.
\end{equation}
Note that $s_\rho(0)=0.$

In the next three results we will see that the characteristics of
a given function $\rho \in \mathcal{O}$,  as a possible strong or
optimal qualification of a SRM, can be determined from properties
of that function $s_\rho$.

\begin{prop}\label{teo:cond-calif}(Necessary and sufficient
condition for strong qualification.) A function $\rho \in
\mathcal{O}$ such that $s_\rho \in \mathcal{S}$ is strong
qualification of $\{g_\alpha\}$ if and only if
\begin{equation}\label{eq:cond-calif}
0<s_\rho(\lambda)<+\infty\quad  \textrm{for every}\quad \lambda
>0.
\end{equation}
\end{prop}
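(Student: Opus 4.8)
The plan is to reduce the whole statement to a single reciprocal identity relating $s_\rho$ to the $\limsup$ that governs the source-order conditions. Since $\rho(\alpha)>0$ for $\alpha>0$ and $\abs{r_\alpha(\lambda)}\geq 0$, the elementary rule $\liminf_\alpha(1/b_\alpha)=1/\limsup_\alpha b_\alpha$ for nonnegative $b_\alpha$ (with the conventions $1/0=+\infty$ and $1/(+\infty)=0$), applied to $b_\alpha=\abs{r_\alpha(\lambda)}/\rho(\alpha)$, gives for every fixed $\lambda>0$
\[
\limsup_{\alpha\to 0^+}\frac{\abs{r_\alpha(\lambda)}}{\rho(\alpha)}=\frac{1}{s_\rho(\lambda)}.
\]
I would establish this first, because it turns the defining $\liminf$ of $s_\rho$ into exactly the quantity appearing in (\ref{eq:O}) and (\ref{eq:no-o}).

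Next I would record how the two requirements defining a strong source-order pair translate. For $s\in\mathcal{S}$ the value $s(\lambda)$ is a constant in $\alpha$ with $0<s(\lambda)<+\infty$ for $\lambda>0$, so it factors out of the $\limsup$, yielding
\[
\limsup_{\alpha\to 0^+}\frac{s(\lambda)\abs{r_\alpha(\lambda)}}{\rho(\alpha)}=\frac{s(\lambda)}{s_\rho(\lambda)}.
\]
Consequently the weak condition (\ref{eq:O}), namely finiteness of this $\limsup$ for all $\lambda>0$, holds if and only if $s_\rho(\lambda)>0$ for all $\lambda>0$, while the additional strong condition (\ref{eq:no-o}), namely strict positivity of this $\limsup$, holds if and only if $s_\rho(\lambda)<+\infty$ for all $\lambda>0$. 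The decisive point is that, after factoring out the finite positive constant $s(\lambda)$, both translated conditions are \emph{independent} of the particular admissible $s$.

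With these two observations the equivalence splits cleanly. For sufficiency I would take $s=s_\rho$ itself as the witness source function; the hypothesis $s_\rho\in\mathcal{S}$ makes this choice admissible, and (\ref{eq:cond-calif}) forces the displayed $\limsup$ to equal $s_\rho(\lambda)/s_\rho(\lambda)=1$ for every $\lambda>0$, which is simultaneously finite and positive, so $(s_\rho,\rho)$ is a strong source-order pair and $\rho$ is strong qualification. For necessity I would start from any $s\in\mathcal{S}$ realizing $(s,\rho)$ as a strong source-order pair and read the translated conditions backwards: finiteness of the $\limsup$ forces $s_\rho(\lambda)>0$ and its positivity forces $s_\rho(\lambda)<+\infty$, for every $\lambda>0$, which is precisely (\ref{eq:cond-calif}).

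The only delicate step is the bookkeeping with the extended-real conventions in the reciprocal identity, in particular handling the degenerate cases $s_\rho(\lambda)=0$ (which sends the $\limsup$ to $+\infty$ and breaks weakness) and $s_\rho(\lambda)=+\infty$ (which sends it to $0$ and breaks strength), together with confirming that factoring the finite positive constant $s(\lambda)$ out of the $\limsup$ is legitimate. Everything else is direct substitution, so I do not expect any genuinely hard estimate; the hypothesis $s_\rho\in\mathcal{S}$ enters only to furnish the ready-made continuous witness needed for the sufficiency direction.
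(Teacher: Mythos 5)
Your proposal is correct and follows essentially the same route as the paper's proof: both rest on the reciprocal identity $\limsup_{\alpha\to 0^+}\abs{r_\alpha(\lambda)}/\rho(\alpha)=1/s_\rho(\lambda)$, factor the constant $s(\lambda)$ out of the $\limsup$ to translate (\ref{eq:O}) and (\ref{eq:no-o}) into $s_\rho(\lambda)>0$ and $s_\rho(\lambda)<+\infty$ respectively, and use $s_\rho$ itself as the witness source function for sufficiency. Your explicit bookkeeping of the extended-real conventions is a minor refinement of what the paper leaves implicit, not a different argument.
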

\begin{proof}
Suppose that $\rho$ is strong qualification of $\{g_\alpha\}$.
Then there exists a function $s \in \mathcal{S}$ such that
$(s,\rho)$ is a strong source-order pair for $\{g_\alpha\}$. Then,
for every $\lambda>0$,
\begin{equation*}
    s_\rho(\lambda)=\underset{\alpha \to 0^+}{\liminf}
\frac{\rho(\alpha)}{\abs{r_\alpha(\lambda)}}=\frac{1}{\underset{\alpha
\to
0^+}{\limsup}\frac{\abs{r_\alpha(\lambda)}}{\rho(\alpha)}}=\frac{s(\lambda)}{\underset{\alpha
\to
0^+}{\limsup}\frac{s(\lambda)\abs{r_\alpha(\lambda)}}{\rho(\alpha)}}.
\end{equation*}
Thus  (\ref{eq:cond-calif}) follows from (\ref{eq:O}) and
(\ref{eq:no-o}).

Conversely, suppose now that $0<s_\rho(\lambda)<+\infty$ for every
$\lambda>0$. We will show that $\rho$ is strong qualification of
$\{g_\alpha\}$. For that let us see that $(s_\rho, \rho)$ is a
strong source-order pair for $\{g_\alpha\}$. Since
$0<s_\rho(\lambda)<+\infty$ for every $\lambda>0$, it follows that
\begin{equation*}
\underset{\alpha \rightarrow
0^+}{\limsup}\,\frac{s_\rho(\lambda)\abs{r_\alpha(\lambda)}}{\rho(\alpha)}=s_\rho(\lambda)\,\underset{\alpha
\rightarrow
0^+}{\limsup}\,\frac{\abs{r_\alpha(\lambda)}}{\rho(\alpha)}=1
\quad \forall\; \lambda>0.
\end{equation*}
Then, $s_\rho$ verifies  (\ref{eq:O}) and (\ref{eq:no-o}), which,
together with the fact that $s_\rho \in \mathcal{S}$, implies that
$(s_\rho,\rho)$ is a strong source-order pair and thus  $\rho$ is
strong qualification of $\{g_\alpha\}$.
\end{proof}

\begin{prop}\label{teo:nu<nu-rho} Let $\rho \in \mathcal{O}$ be
strong qualification of $\{g_\alpha\}$ and $s \in \mathcal{S}$.
Then $(s,\rho)$ is a strong source-order pair for $\{g_\alpha\}$
if and only if there exists $k>0$ such that $s(\lambda)\leq k\,
s_\rho(\lambda)$ for every $\lambda>0$.
\end{prop}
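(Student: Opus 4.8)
The plan is to reduce both implications to a single identity that rewrites the relevant $\limsup$ as the quotient $s/s_\rho$. First I would record that, by the definition (\ref{s-rho}) of $s_\rho$ together with the elementary reciprocal relation $\liminf_{\alpha\to 0^+}\frac{\rho(\alpha)}{\abs{r_\alpha(\lambda)}}=\big(\limsup_{\alpha\to 0^+}\frac{\abs{r_\alpha(\lambda)}}{\rho(\alpha)}\big)^{-1}$ valid for positive quantities, one has for every fixed $\lambda>0$
\begin{equation*}
\underset{\alpha\to 0^+}{\limsup}\,\frac{s(\lambda)\abs{r_\alpha(\lambda)}}{\rho(\alpha)}
= s(\lambda)\,\underset{\alpha\to 0^+}{\limsup}\,\frac{\abs{r_\alpha(\lambda)}}{\rho(\alpha)}
= \frac{s(\lambda)}{s_\rho(\lambda)},
\end{equation*}
where factoring $s(\lambda)$ out of the $\limsup$ is legitimate because it is independent of $\alpha$. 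Since $\rho$ is assumed to be strong qualification, Proposition \ref{teo:cond-calif} guarantees that $0<s_\rho(\lambda)<+\infty$ for every $\lambda>0$, so the quotient is a well-defined finite positive number and the manipulation is justified.

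With this identity, the sufficiency direction is immediate. Assuming there is $k>0$ with $s(\lambda)\le k\,s_\rho(\lambda)$ for all $\lambda>0$, the displayed quantity satisfies $\frac{s(\lambda)}{s_\rho(\lambda)}\le k$ uniformly in $\lambda$, which is exactly condition (\ref{eq:O}); hence $(s,\rho)$ is a weak source-order pair. Moreover, since $s\in\mathcal{S}$ gives $s(\lambda)>0$ and $s_\rho(\lambda)<+\infty$, the same quotient is strictly positive, which is (\ref{eq:no-o}). Thus $(s,\rho)$ is a strong source-order pair.

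For the necessity direction I would start from the assumption that $(s,\rho)$ is a strong source-order pair. Its weak half (\ref{eq:O}) supplies a single constant $\gamma>0$ bounding $\frac{s(\lambda)\abs{r_\alpha(\lambda)}}{\rho(\alpha)}$ (as $\alpha\to 0^+$) simultaneously for all $\lambda>0$; passing to the $\limsup$ and invoking the identity gives $\frac{s(\lambda)}{s_\rho(\lambda)}\le\gamma$, that is $s(\lambda)\le\gamma\,s_\rho(\lambda)$ for every $\lambda>0$, so $k=\gamma$ is the desired constant.

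The step I expect to require the most care is making explicit that the $O(1)$ in (\ref{eq:O}) furnishes a bound \emph{uniform} in $\lambda$: it is precisely this uniformity that converts the family of pointwise inequalities into the single majorization $s\le k\,s_\rho$, and conversely that lets the constant $k$ play the role of the $O(1)$ constant. (Read merely as pointwise boundedness of the $\limsup$ for each separate $\lambda$, the statement would collapse, since every $s\in\mathcal{S}$ would then qualify because $s_\rho$ is finite and positive.) The remaining ingredients—the reciprocal $\liminf/\limsup$ relation and the finiteness and positivity of $s_\rho$ coming from Proposition \ref{teo:cond-calif}—are routine.
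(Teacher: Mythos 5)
Your proof is correct and takes essentially the same route as the paper's: both rest on the identity $\underset{\alpha\to 0^+}{\limsup}\,\frac{s(\lambda)\abs{r_\alpha(\lambda)}}{\rho(\alpha)}=\frac{s(\lambda)}{s_\rho(\lambda)}$ (obtained from the reciprocal $\liminf$/$\limsup$ relation together with $0<s_\rho(\lambda)<+\infty$ from Proposition \ref{teo:cond-calif}), extracting $s\leq k\,s_\rho$ from the uniform constant furnished by (\ref{eq:O}) in one direction, and reading off (\ref{eq:O}) and (\ref{eq:no-o}) from the bounded, positive quotient $s/s_\rho$ in the other. Your closing remark that the $O(1)$ in (\ref{eq:O}) must carry a constant uniform in $\lambda$ is precisely the reading the paper's own proof adopts.
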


\begin{proof}
Since $\rho$ is strong qualification, by Proposition
\ref{teo:cond-calif} it follows that $s_\rho(\lambda)>0$ for every
$\lambda>0$ . Suppose now that $(s,\rho)$ is a strong source-order
pair for $\{g_\alpha\}$. Then there exist positive constants $k$
and $\varepsilon$ such that
$\frac{s(\lambda)\abs{r_\alpha(\lambda)}}{\rho(\alpha)}\leq k$ for
every $\lambda>0$, $\alpha\in (0,\varepsilon)$.  Then, for every
$\lambda>0$
\begin{equation*}
\frac{s(\lambda)}{s_\rho(\lambda)}=s(\lambda)\,\underset{\alpha
\rightarrow
0^+}{\limsup}\,\frac{\abs{r_\alpha(\lambda)}}{\rho(\alpha)}=\underset{\alpha
\rightarrow
0^+}{\limsup}\,\frac{s(\lambda)\abs{r_\alpha(\lambda)}}{\rho(\alpha)}\leq
k,
\end{equation*}
and therefore $s(\lambda)\leq k\,s_\rho(\lambda)$ for every
$\lambda>0$.

Conversely, suppose that there exists $k>0$ such that
$s(\lambda)\leq k \,s_\rho(\lambda)$ for every  $\lambda
>0$. Since $s_\rho(\lambda)>0$, it then follows that
\begin{equation*}
k \geq \frac{s(\lambda)}{s_\rho(\lambda)}= \underset{\alpha
\rightarrow
0^+}{\limsup}\,\frac{s(\lambda)\abs{r_\alpha(\lambda)}}{\rho(\alpha)}
\quad \forall\; \lambda>0,
\end{equation*}
that is, $(s,\rho)$ is a weak source-order pair for
$\{g_\alpha\}$. Moreover since $s(\lambda)$ and $s_\rho(\lambda)$
are positive for all $\lambda>0$, it follows that $s(\lambda)$
verifies (\ref{eq:no-o}) and therefore $(s, \rho)$ is,
furthermore, a strong source-order pair for $\{g_\alpha\}$.
\end{proof}

\begin{thm}\label{teo:cond-calopt}
(Necessary and sufficient condition for optimal qualification.) A
function $\rho \in \mathcal{O}$ such that $s_\rho \in \mathcal{S}$
is optimal qualification of $\{g_\alpha\}$ if and only if $s_\rho$
verifies \text{\rm (\ref{eq:4.50})} and
\text{\rm(\ref{eq:cond-calif})}.
\end{thm}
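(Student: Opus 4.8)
The plan is to recognize $s_\rho$ as the canonical (largest admissible) source function and to test optimality directly on the pair $(s_\rho,\rho)$, exploiting the fact that the two defining requirements of optimal qualification decouple: condition (\ref{eq:cond-calif}) encodes that $(s_\rho,\rho)$ is a strong source-order pair, while condition (\ref{eq:4.50}) for $s_\rho$ encodes that $(\rho,s_\rho)$ is an order-source pair. Throughout I will use the elementary identity
$$\underset{\alpha \to 0^+}{\limsup}\,\frac{s(\lambda)\abs{r_\alpha(\lambda)}}{\rho(\alpha)}=\frac{s(\lambda)}{s_\rho(\lambda)},\qquad \lambda>0,$$
which is immediate from the definition (\ref{s-rho}) of $s_\rho$.

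For sufficiency I would assume that $s_\rho$ satisfies (\ref{eq:cond-calif}) and (\ref{eq:4.50}). By (\ref{eq:cond-calif}) and Proposition \ref{teo:cond-calif} (together with the argument in its proof), the pair $(s_\rho,\rho)$ is a strong source-order pair for $\{g_\alpha\}$. Since $s_\rho$ also satisfies (\ref{eq:4.50}), the pair $(\rho,s_\rho)$ is an order-source pair for $\{g_\alpha\}$. Taking $s=s_\rho$ in Definition \ref{def:calif-3} \textbf{iii)} then shows at once that $\rho$ is optimal qualification of $\{g_\alpha\}$.

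For necessity I would assume $\rho$ is optimal qualification, so there exists $s\in\mathcal{S}$ with $(s,\rho)$ a strong source-order pair and $(\rho,s)$ an order-source pair. First I would establish (\ref{eq:cond-calif}): the strong source-order property, via (\ref{eq:O}), (\ref{eq:no-o}) and the displayed identity above, gives $0<s(\lambda)/s_\rho(\lambda)<+\infty$ for every $\lambda>0$; since $0<s(\lambda)<+\infty$ this forces $0<s_\rho(\lambda)<+\infty$, which is (\ref{eq:cond-calif}), and in particular $\rho$ is strong qualification by Proposition \ref{teo:cond-calif}. Next I would transfer the order-source bound from $s$ to $s_\rho$: since $\rho$ is now known to be strong qualification and $s_\rho\in\mathcal{S}$, Proposition \ref{teo:nu<nu-rho} applied to the strong source-order pair $(s,\rho)$ yields $k>0$ with $s(\lambda)\leq k\,s_\rho(\lambda)$ for every $\lambda>0$. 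Combining this with the order-source inequality for $s$ on $[h(\alpha),+\infty)$ gives
$$\frac{s_\rho(\lambda)\abs{r_\alpha(\lambda)}}{\rho(\alpha)}\geq\frac{1}{k}\,\frac{s(\lambda)\abs{r_\alpha(\lambda)}}{\rho(\alpha)}\geq\frac{\gamma}{k}\qquad\forall\,\lambda\in[h(\alpha),+\infty),$$
which is exactly (\ref{eq:4.50}) for $s_\rho$, with constant $\gamma/k$ and the same $h$.

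I expect the necessity direction to be the delicate part, specifically the transfer of the order-source lower bound from the arbitrary witness $s$ to the canonical $s_\rho$. The subtlety is that Proposition \ref{teo:nu<nu-rho}, which supplies the comparison $s\leq k\,s_\rho$, presupposes that $\rho$ is already strong qualification; hence the order of the steps matters — (\ref{eq:cond-calif}) must be proved first and then used to unlock Proposition \ref{teo:nu<nu-rho}. The remaining manipulations are routine applications of the liminf/limsup identity and of the definitions of the three source-order pairs.
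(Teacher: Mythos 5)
Your proof is correct and follows essentially the same route as the paper's: sufficiency by combining Proposition \ref{teo:cond-calif} with the definition of optimal qualification applied to the pair $(s_\rho,\rho)$, and necessity by first securing (\ref{eq:cond-calif}) (so that $\rho$ is strong qualification) and then invoking Proposition \ref{teo:nu<nu-rho} to obtain $s(\lambda)\leq k\,s_\rho(\lambda)$ and transfer the order-source lower bound from the witness $s$ to $s_\rho$ with constant $\gamma/k$. The ordering subtlety you flag — that (\ref{eq:cond-calif}) must be established before Proposition \ref{teo:nu<nu-rho} can be applied — is handled identically in the paper.
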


\begin{proof}
Suppose that $\rho$ is optimal qualification. Then $\rho$ is
strong qualification  and it follows from Proposition
\ref{teo:cond-calif} that $s_\rho$ verifies (\ref{eq:cond-calif}).
Moreover since $\rho$ is optimal qualification, there exists $s
\in \mathcal{S}$ such that $(s, \rho)$ is a strong source-order
pair and $(\rho, s)$ is an order-source pair. From the latter it
follows that there exist a constant $\gamma>0$ and a function
$h:(0,\alpha_0)\rightarrow \R^+$ with $\underset{\alpha
\rightarrow0^+}{\lim}h(\alpha)=0$, such that
\begin{equation}\label{eq:2}
\frac{s(\lambda)\abs{r_\alpha(\lambda)}}{\rho(\alpha)}\geq \gamma
\quad \forall\; \lambda \in [h(\alpha),+\infty).
\end{equation}
On the other hand, since $(s, \rho)$ is a strong source-order pair
for $\{g_\alpha\}$, it follows from Proposition
\ref{teo:nu<nu-rho} that there exists $k>0$ such that
\begin{equation}\label{eq:3}
s(\lambda)\leq k\, s_\rho(\lambda)\quad \textrm{for every}\;
\lambda>0.
\end{equation}
>From (\ref{eq:2}) and (\ref{eq:3}) it follows that
\begin{equation*}
\frac{s_\rho(\lambda)\abs{r_\alpha(\lambda)}}{\rho(\alpha)}\geq
\frac{\gamma}{k} \quad \forall\; \lambda \in [h(\alpha),+\infty),
\end{equation*}
that is, $s_\rho$ satisfies (\ref{eq:4.50}) as we wanted to show.

Conversely, suppose that $s_\rho \in \mathcal{S}$ verifies
(\ref{eq:4.50}) and (\ref{eq:cond-calif}). By Proposition
\ref{teo:cond-calif} we have that $(s_\rho, \rho)$ is a strong
source-order pair for $\{g_\alpha\}$ and (\ref{eq:4.50}) implies
that $(\rho,s_\rho)$ is an order-source pair. Then, $\rho$ is
optimal qualification of $\{g_\alpha\}$.
\end{proof}

Next we will show the uniqueness of the source function.
\begin{thm}\label{teo:unica nu}
If $\rho$ is optimal qualification of $\{g_\alpha\}$ then there
exists at most one function $s$ (in the sense of the equivalence
classes induced by Definition \ref{def:order}) such that
$(s,\rho)$ is a strong source-order pair and $(\rho,s)$ is an
order-source pair for $\{g_\alpha\}$. Moreover if $s_\rho \in
\mathcal{S}$, then $s_\rho$ is such a unique function.
\end{thm}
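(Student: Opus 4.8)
The plan is to separate the statement into its two assertions: first the identification of $s_\rho$ as a valid source function when $s_\rho \in \mathcal{S}$, and then the uniqueness (up to $\approx$) of any source function realizing the optimal qualification. The first assertion is essentially a corollary of the characterizations already proved. Indeed, if $\rho$ is optimal qualification and $s_\rho \in \mathcal{S}$, Theorem \ref{teo:cond-calopt} guarantees that $s_\rho$ satisfies both (\ref{eq:cond-calif}) and (\ref{eq:4.50}). Condition (\ref{eq:cond-calif}) together with Proposition \ref{teo:cond-calif} shows that $(s_\rho,\rho)$ is a strong source-order pair, while (\ref{eq:4.50}) is exactly the statement that $(\rho,s_\rho)$ is an order-source pair. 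Hence $s_\rho$ is a source function of the required type, and once uniqueness is established it will automatically be the unique one.

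For the uniqueness I would take two functions $s_1,s_2 \in \mathcal{S}$, each such that $(s_i,\rho)$ is a strong source-order pair and $(\rho,s_i)$ is an order-source pair, and prove $s_1 \approx s_2$. The central object is, for each $\lambda>0$,
$$L_i(\lambda)\doteq \limsup_{\alpha \to 0^+} \frac{s_i(\lambda)\abs{r_\alpha(\lambda)}}{\rho(\alpha)} = s_i(\lambda)\,\limsup_{\alpha \to 0^+}\frac{\abs{r_\alpha(\lambda)}}{\rho(\alpha)},$$
the last equality because $s_i(\lambda)$ does not depend on $\alpha$. Writing $A(\lambda)\doteq \limsup_{\alpha \to 0^+}\abs{r_\alpha(\lambda)}/\rho(\alpha)$, the weak part (\ref{eq:O}) of the strong source-order condition forces $A(\lambda)<\infty$ and the strong part (\ref{eq:no-o}) forces $A(\lambda)>0$, so $A(\lambda)\in(0,\infty)$ for every $\lambda>0$; note that $A(\lambda)=1/s_\rho(\lambda)$ by (\ref{s-rho}). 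Consequently $L_i(\lambda)=s_i(\lambda)A(\lambda)\in(0,\infty)$, and, crucially, the factor $A(\lambda)$ cancels in the quotient, giving $s_1(\lambda)/s_2(\lambda)=L_1(\lambda)/L_2(\lambda)$.

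It then suffices to bound each $L_i$ between two positive constants independent of $\lambda$. The lower bound comes from the order-source hypothesis: fixing $\lambda>0$ and using $\lim_{\alpha \to 0^+}h_i(\alpha)=0$, for all sufficiently small $\alpha$ one has $\lambda \in [h_i(\alpha),+\infty)$, so (\ref{eq:4.50}) yields $s_i(\lambda)\abs{r_\alpha(\lambda)}/\rho(\alpha)\ge \gamma_i$; passing to the $\limsup$ gives $L_i(\lambda)\ge \gamma_i>0$ for every $\lambda>0$. The upper bound is the delicate point: since $\rho$ is optimal qualification it is in particular strong qualification (Definition \ref{def:calif-3}), so Proposition \ref{teo:nu<nu-rho} applies and, $(s_i,\rho)$ being a strong source-order pair, furnishes $k_i>0$ with $s_i(\lambda)\le k_i\,s_\rho(\lambda)$, i.e. $L_i(\lambda)\le k_i$, for all $\lambda>0$. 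Combining the two bounds,
$$\frac{\gamma_1}{k_2}\le \frac{s_1(\lambda)}{s_2(\lambda)}=\frac{L_1(\lambda)}{L_2(\lambda)}\le \frac{k_1}{\gamma_2}\qquad \text{for every } \lambda>0,$$
which is even stronger than the equivalence $s_1\approx s_2$ required in the sense of Definition \ref{def:order}. The main obstacle is precisely this uniform upper bound: the weak source-order condition only gives finiteness of $L_i$ pointwise in $\lambda$, and turning that into a single constant $k_i$ valid for all $\lambda$ is exactly the content of Proposition \ref{teo:nu<nu-rho}, on which the argument leans.
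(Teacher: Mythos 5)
Your proof is correct; the identification of $s_\rho$ (via Theorem \ref{teo:cond-calopt} and Proposition \ref{teo:cond-calif}) is exactly the paper's, but your uniqueness argument takes a somewhat different route. The paper compares $s_1$ and $s_2$ directly at finite $\alpha$, never invoking $s_\rho$ or Proposition \ref{teo:nu<nu-rho}: from the order-source property of $s_2$ it writes $s_1(\lambda)\le \frac{s_2(\lambda)}{\gamma}\,\frac{s_1(\lambda)\abs{r_\alpha(\lambda)}}{\rho(\alpha)}$ for $\lambda\in[h(\alpha),+\infty)$, bounds the last factor by a constant $k$ using the source-order property of $s_1$, lets $h(\alpha)\to 0^+$ so the inequality $s_1\le(k/\gamma)\,s_2$ propagates to all $\lambda>0$, and then symmetrizes in $s_1,s_2$. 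You instead normalize both candidates against the canonical function: $L_i(\lambda)=s_i(\lambda)/s_\rho(\lambda)$, with the lower bound $L_i\ge\gamma_i$ coming from (\ref{eq:4.50}) and the uniform upper bound $L_i\le k_i$ delegated to Proposition \ref{teo:nu<nu-rho}, so that in effect $s_1\approx s_\rho\approx s_2$ and you conclude by transitivity. Your organization has a pleasant byproduct the paper's does not make explicit: every admissible source function is equivalent to $s_\rho$ as a positive finite function, even without assuming $s_\rho\in\mathcal{S}$. One caveat: you credit Proposition \ref{teo:nu<nu-rho} with converting the pointwise finiteness in (\ref{eq:O}) into a single constant valid for all $\lambda$, but the paper's proof of that proposition (like its own proof of this theorem) obtains the uniform constant simply by reading the $O(1)$ in (\ref{eq:O}) as uniform in $\lambda$; so your appeal to the proposition relocates that subtlety rather than resolving it, which is legitimate relative to the paper's stated results but worth keeping in mind.
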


\begin{proof}
Given that $\rho$ is optimal qualification of $\{g_\alpha\}$,
there exists at least one function $s$ such that $(s, \rho)$ is a
strong source-order pair and $(\rho, s)$ is an order-source pair
for $\{g_\alpha\}$. Suppose now that there exist $s_1$ and $s_2$
such that $(s_1,\rho)$ and $(s_2,\rho)$ are strong source-order
pairs and $(\rho,s_1)$ and $(\rho,s_2)$ are order-source pairs for
$\{g_\alpha\}$. Then there exist $\gamma>0$ and a function
$h:(0,\alpha_0)\to \R^+$ with $\underset{\alpha
\rightarrow0^+}{\lim}h(\alpha)=0$, such that
$\frac{s_2(\lambda)\abs{r_\alpha(\lambda)}}{\rho(\alpha)}\geq
\gamma$ for every $\lambda \in [h(\alpha),+\infty)$. Then,
\begin{equation}\label{eq:4}
\fl
s_1(\lambda)=\frac{\frac{s_1(\lambda)s_2(\lambda)\abs{r_\alpha(\lambda)}}{\rho(\alpha)}}{\frac{s_2(\lambda)\abs{r_\alpha(\lambda)}}{\rho(\alpha)}}\leq
\frac{s_2(\lambda)}{\gamma}\frac{s_1(\lambda)\abs{r_\alpha(\lambda)}}{\rho(\alpha)}\quad
\forall\, \lambda \in [h(\alpha),+\infty), \forall\, \alpha \in
(0,\alpha_0).
\end{equation}
On the other hand, since $(s_1,\rho)$ is a strong source-order
pair, there exist positive constants $k$ and $\varepsilon$ such
that
\begin{equation}\label{eq:5}
\frac{s_1(\lambda)\abs{r_\alpha(\lambda)}}{\rho(\alpha)}\leq
k\quad \forall \,\lambda>0, \forall\, \alpha\in (0,\varepsilon).
\end{equation}
From (\ref{eq:4}) and (\ref{eq:5}) it follows that
\begin{equation*}
s_1(\lambda)\leq \frac{k}{\gamma}\,s_2(\lambda)\quad \forall
\,\lambda \in [h(\alpha),+\infty), \forall\, \alpha\in
(0,\varepsilon).
\end{equation*}
Since $\underset{\alpha \rightarrow0^+}{\lim}h(\alpha)=0$ we have
that $s_1(\lambda)\leq \frac{k}{\gamma}\,s_2(\lambda)$ for every
$\lambda>0$. Analogously, by interchanging $s_1$ and $s_2$ it
follows that there exists $\tilde{k}>0$ such that
$s_2(\lambda)\leq \tilde{k}\,s_1(\lambda)$ for every $\lambda>0$
and therefore, $s_1 \approx s_2$.

Suppose now that $s_\rho \in \mathcal{S}$. Since $\rho$ is optimal
qualification of $\{g_\alpha\}$ it follows from Theorem
\ref{teo:cond-calopt} that $s_\rho$ verifies (\ref{eq:4.50}) and
(\ref{eq:cond-calif}). Then, $s_\rho$ is the unique function such
that $(s_\rho,\rho)$ is a strong source-order pair and
$(\rho,s_\rho)$ is an order-source pair for $\{g_\alpha\}$.
\end{proof}

The following is a result about the uniqueness of the order.
\begin{thm}
If $(s,\rho_1)$ and $(s,\rho_2)$ are strong source-order pairs for
$\{g_\alpha\}$ and there exists $\underset{\alpha \to
0^+}{\lim}\frac{\rho_1(\alpha)}{\rho_2(\alpha)}$, then
$\rho_1\approx \rho_2$.
\end{thm}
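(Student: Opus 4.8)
The plan is to reduce the claim to showing that the limit $L \doteq \underset{\alpha\to 0^+}{\lim}\frac{\rho_1(\alpha)}{\rho_2(\alpha)}$, which the hypothesis guarantees exists as an element of $[0,+\infty]$, actually lies in $(0,+\infty)$. Once this is established the conclusion is immediate: fixing any $\varepsilon \in (0,L)$, the definition of limit provides $\delta>0$ with $(L-\varepsilon)\,\rho_2(\alpha) \leq \rho_1(\alpha) \leq (L+\varepsilon)\,\rho_2(\alpha)$ for all $\alpha \in (0,\delta)$, which is precisely $\rho_1 \approx \rho_2$ in the sense of Definition \ref{def:order}, with constants $c_1 = L-\varepsilon>0$ and $c_2 = L+\varepsilon$.

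First I would fix an arbitrary $\lambda_0 > 0$ and record the quantitative content of the two strong source-order hypotheses at this point. Writing $M_i(\lambda_0) \doteq \underset{\alpha\to 0^+}{\limsup}\,\frac{s(\lambda_0)\abs{r_\alpha(\lambda_0)}}{\rho_i(\alpha)}$ for $i=1,2$, the weak part of each pair (condition (\ref{eq:O})) gives $M_i(\lambda_0)<+\infty$, while the strong part (condition (\ref{eq:no-o})) gives $M_i(\lambda_0)>0$; note also that $s(\lambda_0)>0$ since $s \in \mathcal{S}$ and $\lambda_0>0$. Thus $0<M_i(\lambda_0)<+\infty$ for $i=1,2$, and both halves of the strong hypothesis — finiteness and positivity — are now in hand for later use.

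The heart of the argument is the elementary factorization
\begin{equation*}
\frac{s(\lambda_0)\abs{r_\alpha(\lambda_0)}}{\rho_1(\alpha)}
= \frac{s(\lambda_0)\abs{r_\alpha(\lambda_0)}}{\rho_2(\alpha)}\cdot\frac{\rho_2(\alpha)}{\rho_1(\alpha)},
\end{equation*}
combined with the fact that, since $\rho_1/\rho_2$ converges, $\rho_2/\rho_1 \to 1/L$ in $[0,+\infty]$. I would then rule out the two degenerate values of $L$. If $L=0$, then $\rho_2(\alpha)/\rho_1(\alpha)\to +\infty$; choosing a sequence $\alpha_n \to 0^+$ along which the first factor tends to $M_2(\lambda_0)>0$, the displayed product tends to $+\infty$, forcing $M_1(\lambda_0)=+\infty$ and contradicting the weak source-order property of $(s,\rho_1)$. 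If $L=+\infty$, then $\rho_2(\alpha)/\rho_1(\alpha)\to 0$; since the first factor stays bounded for $\alpha$ near $0$ (its $\limsup$ equals $M_2(\lambda_0)<+\infty$), the whole product tends to $0$, so $M_1(\lambda_0)=0$, contradicting the strong source-order property of $(s,\rho_1)$. Hence $0<L<+\infty$, completing the reduction.

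I expect the only delicate point to be the handling of the $\limsup$ of the product in these two degenerate cases: the relation $\underset{\alpha\to 0^+}{\limsup}\,(a_\alpha b_\alpha)=\bigl(\underset{\alpha\to 0^+}{\lim}\,b_\alpha\bigr)\,\underset{\alpha\to 0^+}{\limsup}\,a_\alpha$ is clean when $\lim b_\alpha$ is finite and positive, but at the endpoints it must be argued by hand — via a realizing subsequence when $\lim b_\alpha = +\infty$ (which is where $M_2(\lambda_0)>0$ enters) and via boundedness of $a_\alpha$ when $\lim b_\alpha = 0$ (which is where $M_2(\lambda_0)<+\infty$ enters). It is exactly here that both conditions (\ref{eq:O}) and (\ref{eq:no-o}) for the two pairs, together with the assumed existence of the limit, are genuinely used; dropping the existence hypothesis would permit $\rho_1/\rho_2$ to oscillate between distinct finite positive values and the argument structuring $L$ would no longer apply.
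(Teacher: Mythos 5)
Your proof is correct and follows essentially the same route as the paper's: both arguments factor $\frac{s(\lambda)\abs{r_\alpha(\lambda)}}{\rho_1(\alpha)}$ through $\frac{s(\lambda)\abs{r_\alpha(\lambda)}}{\rho_2(\alpha)}\cdot\frac{\rho_2(\alpha)}{\rho_1(\alpha)}$ and rule out the degenerate values of the limit by playing the boundedness (condition (\ref{eq:O})) of one pair against the positivity (condition (\ref{eq:no-o})) of the other. The only cosmetic difference is that in the case $L=0$ you contradict the $O(1)$ bound for $(s,\rho_1)$ via a realizing subsequence, whereas the paper contradicts the positivity of the $\limsup$ for $(s,\rho_2)$; the underlying incompatibility is identical.
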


\begin{proof}
Suppose that $(s,\rho_1)$ and $(s,\rho_2)$ are strong source-order
pairs for $\{g_\alpha\}$. We will first show that
$\underset{\alpha \to
0^+}{\limsup}\frac{\rho_1(\alpha)}{\rho_2(\alpha)}>0.$ Suppose
that
\begin{equation}\label{eq:6}
\underset{\alpha \to
0^+}{\limsup}\frac{\rho_1(\alpha)}{\rho_2(\alpha)}=0.
\end{equation}
Since $(s,\rho_1)$ is a strong source-order pair we have that
\begin{equation}\label{eq:7}
\frac{s(\lambda)\abs{r_\alpha(\lambda)}}{\rho_1(\alpha)}=O(1)\quad
\textrm{for}\;\; \alpha \rightarrow 0^+, \; \forall\; \lambda>0
\end{equation}
and
\begin{equation*}
0<\underset{\alpha \rightarrow
0^+}{\limsup}\,\frac{s(\lambda)\abs{r_\alpha(\lambda)}}{\rho_2(\alpha)}=\underset{\alpha
\rightarrow
0^+}{\limsup}\,\frac{s(\lambda)\abs{r_\alpha(\lambda)}}{\rho_1(\alpha)}\frac{\rho_1(\alpha)}{\rho_2(\alpha)}.
\end{equation*}
It follows from (\ref{eq:6}) and (\ref{eq:7}) that the $\limsup$
on the right-hand side of the previous expression must be equal to
zero, which is a contradiction. Then,  $\underset{\alpha \to
0^+}{\limsup}\frac{\rho_1(\alpha)}{\rho_2(\alpha)}>0.$ Similarly,
it is shown that $\underset{\alpha \to
0^+}{\limsup}\frac{\rho_2(\alpha)}{\rho_1(\alpha)}>0.$ Since there
exists $\underset{\alpha \to
0^+}{\lim}\frac{\rho_1(\alpha)}{\rho_2(\alpha)}$, we then have
that $0<\underset{\alpha \to
0^+}{\lim}\frac{\rho_1(\alpha)}{\rho_2(\alpha)}<+\infty$ and
$0<\underset{\alpha \to
0^+}{\lim}\frac{\rho_2(\alpha)}{\rho_1(\alpha)}<+\infty$. Then,
$\rho_1\preceq \rho_2$ and $\rho_2\preceq \rho_1$, that is,
$\rho_1 \approx \rho_2$, as we wanted to show.
\end{proof}

\section{Examples}

In this section we present several examples which illustrate the
different qualification levels previously introduced as well as
the relationships between them and with the concept of classical
qualification and the qualification introduced in
\cite{ref:Mathe-Pereverzev-2003}. Although some of these examples
are only of academic interest and nature, they do serve to show
the existence of regularization methods possessing qualification
in each one of the levels introduced in this article.

\medskip
\textbf{Example 1.} Tikhonov-Phillips regularization method
$\{g_\alpha\}$, where $g_\alpha(\lambda)\doteq
\frac{1}{\lambda+\alpha}$ has classical qualification of order
$\mu_0=1$ (\cite{bookEHN}). We will see that $\rho(\alpha)=\alpha$
is optimal qualification in the sense of Definition
\ref{def:calif-3} \textbf{\textit{iii)}}. In fact, for $\lambda
>0$, $r_\alpha(\lambda)=\frac{\alpha}{\alpha+\lambda}$ and if
$\rho(\alpha)=\alpha$ then $s_\rho(\lambda)=\underset{\alpha \to
0^+}{\liminf}
\frac{\rho(\alpha)}{\abs{r_\alpha(\lambda)}}=\underset{\alpha \to
0}{\lim}(\lambda + \alpha)= \lambda >0$, that is, $s_\rho$
verifies (\ref{eq:cond-calif}). Also  since
\begin{equation*}
\frac{s_\rho(\lambda)\abs{r_\alpha(\lambda)}}{\rho(\alpha)}=\frac{\lambda}{\lambda+\alpha}\geq
\frac{1}{2} \quad \forall\; \lambda \in [\alpha,+\infty),
\end{equation*}
we have that $s_\rho$ verifies (\ref{eq:4.50}). From Theorem
\ref{teo:cond-calopt} it then follows that $\rho(\alpha)=\alpha$
is optimal qualification of  $\{g_\alpha\}$.

\medskip
\textbf{Example 2.} Let $\{g_\alpha\}$ be the family of functions
associated to the truncated singular value decomposition (TSVD),
\begin{equation*}
g_{\alpha}(\lambda)\doteq \left\{%
\begin{array}{ll}
    \frac{1}{\lambda}, & \hbox{if $\lambda \in [\alpha, +\infty)$} \\
    0,& \hbox{if $\lambda \in [0,\alpha)$}. \\
\end{array}%
\right.
\end{equation*}
It follows that $\mu_0=+\infty$, where $\mu_0$ is as in Definition
\ref{def:calif-clasica}. Therefore, TSVD does not have classical
qualification. In this case we have that
\begin{equation*}
r_{\alpha}(\lambda)= \left\{%
\begin{array}{ll}
    0, & \hbox{if $\lambda \in [\alpha, +\infty)$} \\
    1,& \hbox{if $\lambda \in [0,\alpha)$}. \\
\end{array}%
\right.
\end{equation*}

Let $h(\alpha)=\alpha$ and $\rho \in \mathcal{O}$. Then
\begin{equation*}
\underset{\lambda \in
[h(\alpha),+\infty)}{\sup}\abs{r_\alpha(\lambda)}=
\underset{\lambda\geq\alpha}{\sup}\abs{r_\alpha(\lambda)}=0\leq
\rho(\alpha) \quad \textrm{for every}\; \alpha \in (0,\alpha_0).
\end{equation*}
Then, it follows from Theorem \ref{teo:cond-caldebil}.a) that any
function $\rho \in \mathcal{O}$ is weak qualification of the
method. However, TSVD does not have strong qualification. In fact,
for any function $\rho \in \mathcal{O}$ we have that
$s_\rho(\lambda)=\underset{\alpha \to 0^+}{\liminf}
\frac{\rho(\alpha)}{\abs{r_\alpha(\lambda)}}=+\infty$ for every
$\lambda >0$. Proposition \ref{teo:cond-calif} implies then that
$\rho$ is not strong qualification of the method. In
\cite{ref:Mathe-Pereverzev-2003} it was observed that TSVD has
arbitrary qualification in the sense of Definition
\ref{def:calif-mathe}.

\medskip
\textbf{Example 3.} For $\alpha \in (0,\alpha_0)$ we define
\begin{equation*}
g_{\alpha}(\lambda)\doteq
\frac{1-e^{-\frac{1}{\alpha}}}{\lambda+e^{-\frac{1}{\alpha}}},\quad
\textrm{for every}\; \lambda \in [0,+\infty).
\end{equation*}
It can be immediately verified that $\{g_\alpha\}$ satisfies the
hypotheses \textit{H1-H3} and therefore is a SRM. Since
$r_{\alpha}(\lambda)= \frac{1+\lambda}{1+\lambda\,
e^{\frac{1}{\alpha}}}$ for all $\lambda \in [0,+\infty)$, it
follows that for every $\mu>0$,
\begin{equation*}
    \frac{\abs{r_\alpha(\lambda)}\lambda^\mu}{\alpha^\mu}=\frac{(1+\lambda)\lambda^\mu}{
\lambda \,
e^{\frac{1}{\alpha}}\alpha^\mu+\alpha^\mu}=o(1)\;\textrm{for} \;
\alpha
    \rightarrow 0^+ \; \textrm{for every}\; \lambda \in [0,+\infty).
\end{equation*}
Then, $\{g_\alpha\}$ does not have classical qualification (more
precisely $\mu_0=+\infty$, where $\mu_0$ is as in Definition
\ref{def:calif-clasica}).

We will now show that $\rho(\alpha)=e^{-\frac{1}{\alpha}}$ is
optimal qualification of  $\{g_\alpha\}$. Since
$s_\rho(\lambda)=\underset{\alpha \to 0^+}{\liminf}
\frac{\rho(\alpha)}{\abs{r_\alpha(\lambda)}}=\frac{\lambda}{1+\lambda}
\in (0,+\infty)$ for every $\lambda >0$, it follows from
Proposition \ref{teo:cond-calif} that $\rho$ is strong
qualification of $\{g_\alpha\}$. Moreover since
\begin{equation*}
\frac{s_\rho(\lambda)\abs{r_\alpha(\lambda)}}{\rho(\alpha)}=\frac{\lambda}{\lambda+e^{-\frac{1}{\alpha}}}\geq
\frac{1}{2} \quad \forall\; \lambda \in
[e^{-\frac{1}{\alpha}},+\infty),
\end{equation*}
it follows that $s_\rho$ verifies (\ref{eq:4.50}). Theorem
\ref{teo:cond-calopt} then implies that
$\rho(\alpha)=e^{-\frac{1}{\alpha}}$ is optimal qualification of
$\{g_\alpha\}$.

\medskip
\textbf{Example 4.} For $\alpha \in (0,\alpha_0)$ with
$\alpha_0<e^{-1}$, define
\begin{equation*}
g_{\alpha}(\lambda)\doteq \frac{1+ (\ln{\alpha})^{-1}}{\lambda
-(\ln\alpha)^{-1}},\quad \textrm{for every}\; \lambda \in
[0,+\infty).
\end{equation*}
Clearly, $\{g_\alpha\}$ satisfies hypotheses \textit{H1-H3} and
therefore is a SRM. Since $r_{\alpha}(\lambda)=
\frac{1+\lambda}{1-\lambda\,\ln \alpha}$ for all $\lambda \in
[0,+\infty)$, it follows that for every $\mu>0$,
\begin{equation*}
    \frac{\abs{r_\alpha(\lambda)}\lambda^\mu}{\alpha^\mu}=\frac{(1+\lambda)\lambda^\mu}{
\alpha^\mu-\lambda \, \alpha^\mu \ln\alpha}\to
+\infty\;\textrm{for} \; \alpha
    \rightarrow 0^+ \; \textrm{for every}\; \lambda \in [0,+\infty).
\end{equation*}
Then, $\mu_0=0$ and therefore $\{g_\alpha\}$ does not have
classical qualification.

However, we will show that $\rho(\alpha)=-(\ln \alpha)^{-1}$ is
optimal qualification of $\{g_\alpha\}$. In fact, since
$s_\rho(\lambda)=\underset{\alpha \to 0^+}{\liminf}
\frac{\rho(\alpha)}{\abs{r_\alpha(\lambda)}}=\underset{\alpha \to
0}{\lim}\frac{\lambda-(\ln
\alpha)^{-1}}{1+\lambda}=\frac{\lambda}{1+\lambda} \in
(0,+\infty)$ for every $\lambda>0$ and
\begin{equation*}
\frac{s_\rho(\lambda)\abs{r_\alpha(\lambda)}}{\rho(\alpha)}=\frac{\lambda}{\lambda-(\ln
\alpha)^{-1}}\geq \frac{1}{2} \quad \forall\; \lambda \in
[-(\ln\alpha)^{-1},+\infty),
\end{equation*}
it follows from Theorem  \ref{teo:cond-calopt} that
$\rho(\alpha)=-(\ln\alpha)^{-1}$ is optimal qualification of
$\{g_\alpha\}$.

\medskip
\textbf{Example 5.} Let $\{g_\alpha\}$ be the Tikhonov-Phillips
regularization method, which, as previously mentioned, it has
classical qualification of order $\mu_0=1$. In Example 1 we saw
that $\rho(\alpha)=\alpha$ is optimal qualification of this method
and therefore it is also weak qualification of it. Since $\alpha
\preceq \alpha^{\frac 1 2}$ it follows from Definition
\ref{def:calif-3}.i) and Observation 2.a) that
$\rho^*(\alpha)=\alpha^{\frac 1 2}$ is also weak qualification.
However, $\rho^*$ is not strong qualification of the method. In
fact, for any $s \in \mathcal{S}$, we have that
\begin{equation*}
    \underset{\alpha \rightarrow
0^+}{\limsup}\,\frac{s(\lambda)\abs{r_\alpha(\lambda)}}{\rho^*(\alpha)}=
\underset{\alpha \rightarrow
0^+}{\limsup}\,\frac{s(\lambda)\;\alpha^{\frac 1 2}}{\alpha +
\lambda}=0\quad \forall\; \lambda>0.
\end{equation*}

\medskip
\textbf{Example 6.} Let $\{g_\alpha\}$ be the SRM defined in
Example 4. This method does not have classical qualification since
$\mu_0=0$. We proved that $-(\ln \alpha)^{-1}$ is optimal
qualification and therefore, it is also weak qualification. Since
$-(\ln \alpha)^{-1}\preceq (-\ln \alpha)^{-\frac 1 2}$, just like
in the previous example, it follows immediately that
$\rho(\alpha)=(-\ln \alpha)^{-\frac 1 2}$ is weak qualification.
Let us show now that $\rho$ is not strong qualification of the
method. For any $s \in \mathcal{S}$, we have that
\begin{equation*}
    \underset{\alpha \rightarrow
0^+}{\limsup}\,\frac{s(\lambda)\abs{r_\alpha(\lambda)}}{\rho(\alpha)}=
\underset{\alpha \rightarrow
0^+}{\limsup}\,\frac{s(\lambda)\;(1+\lambda)}{(1-\lambda \ln
\alpha)\, (- \ln \alpha)^{-\frac 1 2}}=0\quad \forall\; \lambda>0.
\end{equation*}

\bigskip
\bigskip
It is important to observe that if $\rho(\alpha)=\alpha^\mu$ is
strong qualification of a SRM then it follows immediately from the
definition of strong source-order pair that the method has
classical qualification of order $\mu$. The converse, however, is
not true as the next example shows. Hence it is the weak and not
the strong qualification what generalizes the classical notion of
this concept.

\medskip
\textbf{Example 7.} For $\alpha \in (0,\alpha_0)$ with
$\alpha_0<1/2$ define
\begin{equation*}
    h_\alpha(\lambda)\doteq \frac{\alpha}{\alpha+\ln(\frac{\alpha}{\alpha+\lambda})}
\end{equation*}
and
\begin{equation*}
g_{\alpha}(\lambda)\doteq \left\{%
\begin{array}{ll}
    \frac{1-h_\alpha(\lambda)}{\lambda+h_\alpha(\lambda)}, & \hbox{if $\lambda \in [2\alpha, +\infty)$} \\
   \frac{1-h_\alpha(2\alpha)}{2\alpha+h_\alpha(2\alpha)}=\left(2\alpha-\frac{\alpha+2\alpha^2}{\ln{3}}\right)^{-1} ,& \hbox{if $\lambda \in [0,2\alpha)$}. \\
\end{array}%
\right.
\end{equation*}
In this case,
\begin{equation*}
r_{\alpha}(\lambda)\doteq \left\{%
\begin{array}{ll}
    \frac{\alpha(1+\lambda)}{\lambda\ln(\frac{\alpha}{\alpha+\lambda})+\alpha(1+\lambda)}, & \hbox{if $\lambda \in [2\alpha, +\infty)$} \\
   1-\lambda\left(2\alpha-\frac{\alpha+2\alpha^2}{\ln{3}}\right)^{-1} ,& \hbox{if $\lambda \in [0,2\alpha)$}. \\
\end{array}%
\right.
\end{equation*}
One can immediately show that $\{g_\alpha\}$ is a SRM with
classical qualification of order $\mu_0=1$. However,
$\rho(\alpha)=\alpha$ is not strong qualification of the method.
In fact, for any $s \in \mathcal{S}$, we can see that
\begin{equation*}
\frac{s(\lambda)\abs{r_\alpha(\lambda)}}{\alpha}=o(1)\quad
\textrm{for}\;\; \alpha \rightarrow 0^+, \; \forall\; \lambda\geq
0
\end{equation*}
and therefore condition (\ref{eq:no-o}) is not satisfied.

\bigskip
\medskip

SRMs possessing strong but not optimal qualification, have very
peculiar properties. Thus for instance, it is possible to show
that if $\rho$ is strong qualification which is not optimal, then
$\forall \lambda>0$, the function
$\frac{s_\rho(\lambda)|r_\alpha(\lambda)|}{\rho(\alpha)}$ it is
not of bounded variation as a function of $\alpha$ in any
neighborhood of $\alpha=0$. Even so, the following three examples
show the existence of SRM having strong but not optimal
qualification and they show that strong qualification in no case
implies optimal qualification.

\medskip
\textbf{Example 8.} Given $k \in \R^+$, for $\alpha,\lambda>0$
define
\begin{equation*}
    g_\alpha^k(\lambda)\doteq\lambda^{-1}(1-e^{-\frac \lambda \alpha})-\alpha^k\lambda^{-3/2}
    \left|{\textrm{sin}{(\lambda^{\frac 3 2}/\alpha)}}\right|\;,
\end{equation*}
so that $r_\alpha^k(\lambda)=e^{-\frac \lambda \alpha} + \alpha^k
\lambda^{-1/2}\abs{\textrm{sin}{(\lambda^{\frac 3 2}/\alpha)}}.$
It can be immediately checked that $\{g^k_\alpha\}$ is a SRM with
classical qualification of order $k$. With $\rho(\alpha)=\alpha^k$
we have that $\forall\,\lambda>0$,
\begin{eqnarray*}
  s_\rho(\lambda)&=& \underset{\alpha \to 0^+}{\liminf} \frac{\alpha^k}{e^{-\frac{\lambda}{\alpha}}+\alpha^k\lambda^{-1/2}\abs{\textrm{sin}(\lambda^{\frac 3 2}/\alpha)}} \\
  &=& \frac{1}{\underset{\alpha \to 0^+}{\limsup}\left(\alpha^{-k}e^{-\frac \lambda \alpha}+\lambda^{-1/2}\abs{\textrm{sin}(\lambda^{\frac 3 2}/\alpha)}\right)} \\
  &=& \lambda^{1/2}.
\end{eqnarray*}
Since $s_\rho(\lambda)=\lambda^{1/2}\in\mathcal{S}$, from
Proposition \ref{teo:cond-calif} it follows that $(s_\rho,\rho)$
is a strong source-order pair and $\rho(\alpha)=\alpha^k$ is
strong qualification of the method. However, for every
$\lambda>0$,
\begin{eqnarray*}
   \underset{\alpha \to 0^+}{\liminf}\,\frac{s_\rho(\lambda)\abs{r_\alpha(\lambda)}}{\rho(\alpha)}&=&
   \underset{\alpha \to 0^+}{\liminf}\,\left[\lambda^{1/2}\alpha^{-k}e^{-\frac \lambda \alpha}+\abs{\textrm{sin}\left(\frac
   {\lambda^{\frac 3 2}}
   \alpha\right)}\right]\;=\;0.
\end{eqnarray*}
Therefore equation (\ref{eq:4.50}) does not hold and
$\rho(\alpha)=\alpha^k$ is not optimal qualification of the
method.

\medskip
\textbf{Example 9.} For $\alpha,\lambda>0$ define
$g_\alpha(\lambda)$ as follows:
\begin{eqnarray*}
g_\alpha(\lambda)\;\doteq\;\lambda^{-1}(1-e^{-\frac\lambda\alpha})
- e^{-\frac{1}{\sqrt\alpha}}
\lambda^{-3/2}\abs{\sin(\lambda^{\frac 3 2}/\alpha)},
\end{eqnarray*}
so that
\begin{eqnarray*}
r_\alpha(\lambda)\;=\;e^{-\frac\lambda\alpha} +
e^{-\frac{1}{\sqrt\alpha}}\lambda^{-1/2}\abs{\sin(\lambda^{\frac 3
2}/\alpha)}.
\end{eqnarray*}
It can be immediately verified that $\{g_\alpha\}$ is a SRM which
does not have classical qualification ($\mu_0=\infty$). However,
with  $\rho(\alpha)\doteq e^{-\frac{1}{\sqrt\alpha}}$ we have that
\begin{eqnarray*}
s_\rho(\lambda)\;&=&\;\underset{\alpha \to
0^+}{\liminf}\;\frac{\rho(\alpha)}{r_\alpha(\lambda)}\\
&=&\frac{1}{\underset{\alpha \to
0^+}{\limsup}\left[e^{-\frac\lambda\alpha+\frac{1}{\sqrt\alpha}}
+\lambda^{-1/2}|\sin(\lambda^{\frac 3 2}/\alpha)|\;\right]}\\
&=&\lambda^{\frac12}.
\end{eqnarray*}
Since $s_\rho(\lambda)=\lambda^{1/2}\in\mathcal{S}$, by
Proposition \ref{teo:cond-calif} $(s_\rho,\rho)$ is a strong
source-order pair and $\rho(\alpha)=e^{-1/\sqrt\alpha}$ is strong
qualification of the method. However, $\forall\,\lambda>0$ we have
that
\begin{eqnarray*}
\underset{\alpha \to 0^+}{\liminf} \;
\frac{s_\rho(\lambda)|r_\alpha(\lambda)|}{\rho(\alpha)} \;=\;
\underset{\alpha \to
0^+}{\liminf}\left(\lambda^{1/2}e^{\frac{1}{\sqrt\alpha}-\frac\lambda\alpha}
+ |\sin(\lambda^{\frac 3 2}/\alpha)| \right)\;=\;0,
\end{eqnarray*}
and therefore (\ref{eq:4.50}) does not hold and
$\rho(\alpha)=e^{-\frac{1}{\sqrt\alpha}}$ is not optimal
qualification of the method.

\medskip
\textbf{Example 10.} For $0<\alpha <1$ and $\lambda>0$ define
\begin{eqnarray*}
g_\alpha(\lambda)\;\doteq\;\lambda^{-1}(1-e^{-\frac\lambda\alpha})
+ (\textrm{ln}\;\alpha)^{-1}
\lambda^{-3/2}\abs{\sin(\lambda^{\frac 3 2}/\alpha)},
\end{eqnarray*}
so that
\begin{eqnarray*}
r_\alpha(\lambda)\;=\;e^{-\frac\lambda\alpha} -
(\textrm{ln}\;\alpha)^{-1} \lambda^{-1/2}\abs{\sin(\lambda^{\frac
3 2}/\alpha)}.
\end{eqnarray*}
Just like in Examples 8 and 9 it can be easily checked that
$\left\{ g_\alpha\right\}$ is a SRM which does not have classical
qualification ($\mu_0=0$), that
$\rho(\alpha)=\frac{-1}{\textrm{ln}\;\alpha}$ is strong but not
optimal qualification of the method and that $(s_\rho,\rho)$ is a
strong source-order pair with $s_\rho(\lambda)=\lambda^{\frac12}$.

\bigskip
Note that examples 2, 3, 4, 6, 9 and 10 correspond to SRMs which
do not have classical qualification but, however, they do have
generalized qualification, falling in some of its three different
levels. Also  Landweber's method and Showalter's method, which as
previously pointed out do not have classical qualification (in
both cases  $\mu_0=\infty$), are SRMs defined by
$g_\alpha(\lambda) \doteq\frac 1 \lambda (1-(1-\mu\lambda)^{\frac
1
 \alpha})$ (where $\alpha\le 1,\; \mu<\frac{1}{\norm{T}^2})$ and
 $g_\alpha(\lambda)\doteq\frac 1 \lambda
 (1-e^{-\frac{\lambda}{\alpha}})$, respectively. It can be easily
 proved, by using Theorem \ref{teo:cond-caldebil}, that $\rho(\alpha)=(1-\mu\alpha^{\frac12})^\frac 1
 \alpha$ is weak qualification of Landweber's method and
 $\rho(\alpha)=e^{-\frac{1}{\sqrt\alpha}}$ is weak qualification of
 Showalter's method. However, in this last case it
 can be easily shown that
 $\rho(\alpha)=e^{-\frac{1}{\sqrt\alpha}}$ does not satisfy condition
 (\ref{eq:calif-mathe}) and therefore $\rho(\alpha)$
 is not qualification in the sense of Definition \ref{def:calif-mathe}.

\bigskip
The different qualification levels introduced in this article and
the relationships between them are visualized in Figure 1.

\begin{figure}[h]
\begin{center}
 \includegraphics[width=1.00\textwidth]{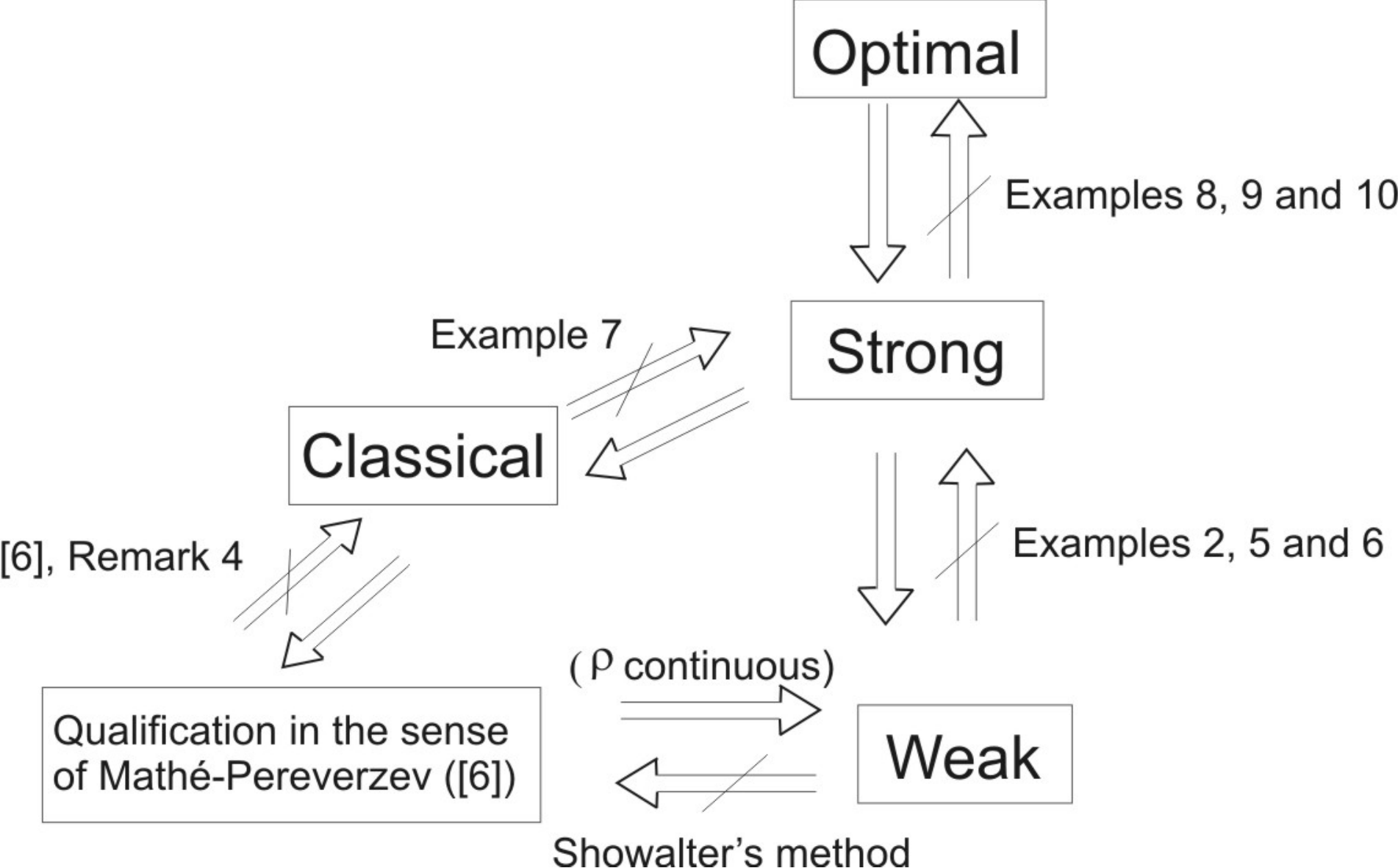}
\vskip .5in
 \includegraphics[width=0.5\textwidth]{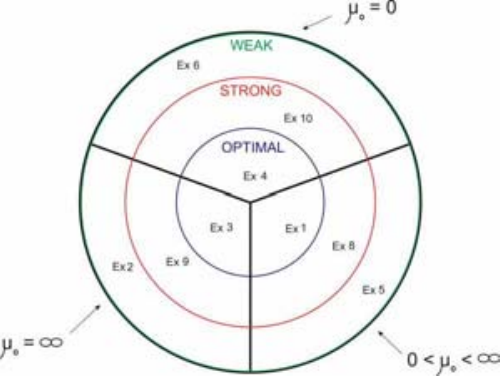}
\end{center}
\caption{Relationships between the different qualification levels,
the classical qualification  and the qualification defined in
\cite{ref:Mathe-Pereverzev-2003}.}
\end{figure}

\section{Orders of convergence, converse results and maximal source sets}

The generalization of the concept of qualification of a SRM
introduced in the previous sections is strongly related with and
it has a broad spectrum of applications in the context of orders
of convergence, converse results and maximal source sets for
inverse ill-posed problems. We present next some results in this
direction. However, we point out that this is not the main
objective of the present article. For that reason, some of this
results will be stated without proof. More detailed results in
this regard will appear in a forthcoming article.

Let $X, Y$ be infinite dimensional Hilbert spaces and
$T:X\rightarrow Y$ a bounded, linear invertible operator such that
$\mathcal{R}(T)$ is not closed. For $s\in\mathcal{S}$, the set
$\mathcal{R}(s(T^\ast T))$, will be referred to as  the ``source
set associated to the function $s$ and the operator $T$". In all
that follows, the hypothesis $s\in \mathcal{S}$ can be replaced by
$s$ continuous on $\sigma(T^\ast T)$ and $s\in \mathcal{M}_0$,
where $\mathcal{M}_0$ is the set of all functions $f:\R
\rightarrow \R^+_0$ which are measurable with respect to the
measures $d\norm{E_{\lambda}x}^2$ for every $x \in X$.

The following direct result, whose proof follows immediately from
the concept of weak source-order pair, states that if the exact
solution $x^\dag$ of the problem $Tx=y$ belongs to the source set
$\mathcal{R}(s(T^\ast T))$ and $(s,\rho)$ is a weak source-order
pair for $\{g_\alpha\}$, then the regularization error
$\norm{R_\alpha y-x^\dag}$ has order of convergence
$\rho(\alpha)$. For brevity reasons we do not give the proof here.

\begin{thm}\label{teo:gen-4.3}
Let $\rho\in\mathcal{O}$ be weak qualification of $\{g_\alpha\}$
and $s \in \mathcal{S}$ such that $(s,\rho)$ is a weak
source-order pair for $\{g_\alpha\}$. If $x^\dag\doteq T^\dag y
\in \mathcal{R}(s(T^\ast T))$ then
$\norm{(R_\alpha-T^\dag)y}=O(\rho(\alpha))$ for $\alpha
\rightarrow 0^+$.
\end{thm}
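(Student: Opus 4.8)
The plan is to express the regularization error via the spectral calculus and reduce the norm bound to a pointwise estimate encapsulated by the weak source-order pair hypothesis. First I would use the identity $R_\alpha = g_\alpha(T^\ast T)T^\ast$ together with the spectral representation $x^\dag = T^\dag y = \int \frac{1}{\lambda}\, dE_\lambda T^\ast y$, so that the regularization error can be written as
\begin{equation*}
(R_\alpha - T^\dag)y = \left(g_\alpha(T^\ast T) - (T^\ast T)^\dag\right)T^\ast y = -\int r_\alpha(\lambda)\,\frac{1}{\lambda}\, dE_\lambda\, T^\ast y,
\end{equation*}
where $r_\alpha(\lambda) = 1 - \lambda g_\alpha(\lambda)$. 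More cleanly, recalling that $x^\dag$ satisfies $x^\dag = \int dE_\lambda\, x^\dag$, one has $(R_\alpha - T^\dag)y = -\int r_\alpha(\lambda)\, dE_\lambda\, x^\dag$, whence by the spectral theorem
\begin{equation*}
\norm{(R_\alpha - T^\dag)y}^2 = \int \abs{r_\alpha(\lambda)}^2\, d\norm{E_\lambda x^\dag}^2.
\end{equation*}

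Next I would exploit the source condition $x^\dag \in \mathcal{R}(s(T^\ast T))$: there exists $w \in X$ with $x^\dag = s(T^\ast T)w$, so that $d\norm{E_\lambda x^\dag}^2 = s(\lambda)^2\, d\norm{E_\lambda w}^2$. Substituting gives
\begin{equation*}
\norm{(R_\alpha - T^\dag)y}^2 = \int \abs{r_\alpha(\lambda)}^2 s(\lambda)^2\, d\norm{E_\lambda w}^2.
\end{equation*}
The key step is now to invoke the weak source-order pair hypothesis. Since $(s,\rho)$ satisfies \eqref{eq:O}, for each $\lambda > 0$ the quantity $s(\lambda)\abs{r_\alpha(\lambda)}/\rho(\alpha)$ is bounded as $\alpha \to 0^+$; what is actually needed here is a \emph{uniform} bound $s(\lambda)\abs{r_\alpha(\lambda)} \leq \gamma\,\rho(\alpha)$ for all $\lambda$ in the spectrum and all small $\alpha$, i.e. a constant $\gamma$ independent of $\lambda$. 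I would therefore argue that the weak source-order pair condition, combined with hypothesis \textit{H2} (which controls $\abs{r_\alpha(\lambda)}$ near $\lambda = 0$, since $\abs{r_\alpha(\lambda)} = \abs{1 - \lambda g_\alpha(\lambda)} \leq 1 + C$) and the continuity and vanishing of $s$ at the origin, yields such a uniform estimate on the compact spectrum $\sigma(T^\ast T) \subseteq [0, \norm{T}^2]$.

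Granting the uniform bound $s(\lambda)\abs{r_\alpha(\lambda)} \leq \gamma\,\rho(\alpha)$, the integral estimate immediately gives
\begin{equation*}
\norm{(R_\alpha - T^\dag)y}^2 \leq \gamma^2 \rho(\alpha)^2 \int d\norm{E_\lambda w}^2 = \gamma^2 \rho(\alpha)^2 \norm{w}^2,
\end{equation*}
so that $\norm{(R_\alpha - T^\dag)y} \leq \gamma \norm{w}\,\rho(\alpha) = O(\rho(\alpha))$, as claimed. The main obstacle I anticipate is precisely the passage from the pointwise $O(1)$ statement in the definition of a weak source-order pair to the uniform-in-$\lambda$ bound required to pull $\rho(\alpha)$ out of the integral; the pointwise definition by itself does not guarantee uniformity, so one must verify that the supremum over the compact spectrum behaves well, using the boundedness of $\abs{r_\alpha(\lambda)}$ from \textit{H2} and the controlled behavior of $s(\lambda)\abs{r_\alpha(\lambda)}$ near $\lambda = 0$. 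This is likely why the authors remark that the proof ``follows immediately from the concept of weak source-order pair'' while declining to write it out in full, but it is exactly the uniformity point that deserves the most care.
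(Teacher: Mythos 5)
Your skeleton --- writing $(R_\alpha-T^\dag)y=-r_\alpha(T^\ast T)\,x^\dag$, inserting $x^\dag=s(T^\ast T)w$, and pulling a bound $s(\lambda)\abs{r_\alpha(\lambda)}\le\gamma\,\rho(\alpha)$ out of the spectral integral --- is exactly what the authors mean when they say the proof ``follows immediately from the concept of weak source-order pair,'' and your final two lines do constitute a complete proof \emph{once the uniform bound is granted}. The genuine gap is the step where you claim that this uniform bound can be \emph{derived} from the pointwise condition (\ref{eq:O}) using \textit{H2}, compactness of $\sigma(T^\ast T)$, and continuity of $s$ with $s(0)=0$. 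That implication is false. Take $g_\alpha(\lambda)=\lambda^{-1}\left(1-\min(1,\alpha/\lambda^3)\right)$, so that $r_\alpha(\lambda)=\min(1,\alpha/\lambda^3)$; this satisfies \textit{H1}--\textit{H3} with $C=1$. Take $s(\lambda)=\lambda^2\in\mathcal{S}$ and $\rho(\alpha)=\alpha\in\mathcal{O}$. For each fixed $\lambda>0$ one has $s(\lambda)\abs{r_\alpha(\lambda)}/\alpha\le 1/\lambda$, so (\ref{eq:O}) holds pointwise; moreover $\abs{r_\alpha}\le 1$ and $s$ vanishes continuously at $0$. Nevertheless, at $\lambda=\alpha^{1/3}$ the quotient equals $\alpha^{-1/3}\rightarrow\infty$: the supremum in $\lambda$ is attained at an $\alpha$-dependent point sliding to $0$, precisely the regime that neither the pointwise constants (which blow up as $\lambda\rightarrow 0$) nor the decay of $s$ can control. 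And the conclusion of the theorem itself fails there: let $T^\ast T$ have eigenvalues $\lambda_n=1/n$ with eigenvectors $e_n$, let $w=\sum_n n^{-1}e_n$ and $x^\dag=s(T^\ast T)w=\sum_n n^{-3}e_n$; then
\begin{equation*}
\norm{(R_\alpha-T^\dag)y}^2=\sum_{n=1}^{\infty}\min\bigl(1,\alpha n^3\bigr)^2 n^{-6}
\;\ge \sum_{n\le \alpha^{-1/3}}\alpha^2
\;\ge\;\frac{1}{2}\,\alpha^{5/3}
\end{equation*}
for all small $\alpha$, so $\norm{(R_\alpha-T^\dag)y}\ge c\,\alpha^{5/6}$, which is not $O(\alpha)$.

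The resolution is definitional, not analytic: condition (\ref{eq:O}) must be read as a \emph{uniform} statement --- there exist $k,\varepsilon>0$, independent of $\lambda$, such that $s(\lambda)\abs{r_\alpha(\lambda)}\le k\,\rho(\alpha)$ for all $\lambda>0$ and all $\alpha\in(0,\varepsilon)$. That this is the authors' intended reading is visible in their own later proofs: in Proposition \ref{teo:nu<nu-rho}, and again in (\ref{eq:5}) within the proof of Theorem \ref{teo:unica nu}, the source-order-pair hypothesis is immediately converted into exactly such a $\lambda$-independent bound. Under that reading, your concluding display is the entire proof and nothing more is needed; under the strictly pointwise reading you adopt, the theorem is false by the example above, so no argument can close the gap. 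In short: right decomposition, correct identification of the crux, but the proposed pointwise-to-uniform upgrade is a step that fails; the fix is to take uniformity as part of the definition rather than attempt to prove it.
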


It is important to note here that the previous result can be
viewed as a generalization of Theorem 4.3 in \cite{bookEHN}, to
the case of SRM with weak qualification and general source sets.
In fact, that result corresponds to the particular case in which
$\{g_\alpha\}$ has classical qualification of order $\mu$.

The following converse result states that if the regularization
error has order of convergence $\rho(\alpha)$ and $(\rho,s)$ is an
order-source pair, then the exact solution belongs to the source
set given by the range of the operator $s(T^\ast T)$.

\begin{thm}\label{teo:gen-411} If $(\rho,s)$ is an order-source
pair for $\{g_\alpha\}$ and
$\norm{(R_\alpha-T^\dag)y}=O(\rho(\alpha))$ for $\alpha\rightarrow
0^+$, then $x^\dag \in \mathcal{R}(s(T^\ast T)).$
\end{thm}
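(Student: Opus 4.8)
The plan is to reduce the error to a spectral integral against the measure $d\norm{E_\lambda x^\dag}^2$ and then invoke the standard characterization of the range of $s(T^\ast T)$. First I would rewrite the regularization error in spectral form. Since $T$ is invertible, $x^\dag=T^\dag y$ is the unique solution of $Tx=y$ and satisfies the normal equation $T^\ast T x^\dag = T^\ast y$; hence $R_\alpha y = g_\alpha(T^\ast T)T^\ast y = g_\alpha(T^\ast T)T^\ast T\,x^\dag$, so that
\begin{equation*}
(R_\alpha - T^\dag)y = \left(g_\alpha(T^\ast T)T^\ast T - I\right)x^\dag = -\,r_\alpha(T^\ast T)\,x^\dag,
\end{equation*}
with $r_\alpha(\lambda)=1-\lambda g_\alpha(\lambda)$. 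The spectral theorem then gives
\begin{equation*}
\norm{(R_\alpha - T^\dag)y}^2 = \int_0^{\norm{T}^2+}\abs{r_\alpha(\lambda)}^2\,d\norm{E_\lambda x^\dag}^2,
\end{equation*}
and the hypothesis $\norm{(R_\alpha-T^\dag)y}=O(\rho(\alpha))$ supplies a constant $M>0$ and some $\alpha_1>0$ with $\int_0^{\norm{T}^2+}\abs{r_\alpha(\lambda)}^2\,d\norm{E_\lambda x^\dag}^2 \le M\,\rho(\alpha)^2$ for every $\alpha\in(0,\alpha_1)$.

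Next I would exploit the order-source lower bound. Because $(\rho,s)$ is an order-source pair, there exist $\gamma>0$ and $h$ with $\underset{\alpha\to0^+}{\lim}h(\alpha)=0$ such that, by (\ref{eq:4.50}), $\abs{r_\alpha(\lambda)}\ge \gamma\,\rho(\alpha)/s(\lambda)$ for every $\lambda\ge h(\alpha)$. Restricting the integral above to $[h(\alpha),+\infty)$ and inserting this pointwise estimate yields
\begin{equation*}
\gamma^2\rho(\alpha)^2\int_{h(\alpha)}^{\norm{T}^2+}\frac{1}{s(\lambda)^2}\,d\norm{E_\lambda x^\dag}^2 \le \int_{h(\alpha)}^{\norm{T}^2+}\abs{r_\alpha(\lambda)}^2\,d\norm{E_\lambda x^\dag}^2 \le M\,\rho(\alpha)^2,
\end{equation*}
whence, dividing by $\gamma^2\rho(\alpha)^2>0$,
\begin{equation*}
\int_{h(\alpha)}^{\norm{T}^2+}\frac{1}{s(\lambda)^2}\,d\norm{E_\lambda x^\dag}^2 \le \frac{M}{\gamma^2}
\end{equation*}
for every $\alpha\in(0,\alpha_1)$.

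I would then pass to the limit $\alpha\to0^+$. The quantity $F(\beta)\doteq\int_\beta^{\norm{T}^2+}s(\lambda)^{-2}\,d\norm{E_\lambda x^\dag}^2$ is nonincreasing in $\beta$, and since $h(\alpha)\to0$ one can make $h(\alpha)$ smaller than any prescribed $\beta>0$; monotonicity then gives $F(\beta)\le M/\gamma^2$ for every $\beta>0$. By monotone convergence (the integrand being nonnegative), $\int_0^{\norm{T}^2+}s(\lambda)^{-2}\,d\norm{E_\lambda x^\dag}^2=\underset{\beta\to0^+}{\lim}F(\beta)\le M/\gamma^2<+\infty$. Finally, invertibility of $T$ ensures the spectral measure of $T^\ast T$ has no atom at the origin, so the vector $v\doteq\int_0^{\norm{T}^2+}s(\lambda)^{-1}\,dE_\lambda x^\dag$ is well defined (its squared norm being exactly the finite integral just bounded) and satisfies $s(T^\ast T)v = \int_0^{\norm{T}^2+}s(\lambda)\,s(\lambda)^{-1}\,dE_\lambda x^\dag = x^\dag$, so that $x^\dag\in\mathcal{R}(s(T^\ast T))$.

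The hard part is not any single computation but the careful handling of the origin together with the possibly non-monotone cutoff $h$: one must make sure that the uniform bound on the truncated integrals genuinely transfers to the full integral over $(0,\norm{T}^2]$, and that dividing by $s(\lambda)$ near $\lambda=0$ is legitimate. Both points rest on the invertibility of $T$ (no spectral mass at $0$) and on the monotone-convergence step above; once these are secured, the range characterization of $s(T^\ast T)$ closes the argument.
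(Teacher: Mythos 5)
Your proof is correct and is exactly the argument the paper has in mind: the paper's own ``proof'' of this theorem is the single line that it follows immediately from the definition of order-source pair, and your spectral-integral reduction, the pointwise lower bound $\abs{r_\alpha(\lambda)}\geq \gamma\,\rho(\alpha)/s(\lambda)$ on $[h(\alpha),+\infty)$, and the range characterization of $s(T^\ast T)$ are precisely the suppressed details. Your treatment of the origin (no spectral mass at $\lambda=0$ since $T$ is injective) and the monotone-convergence passage from the truncated integrals to the full integral are both sound, so nothing is missing.
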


\begin{proof} The proof follows immediately from the definition of
order-source pair for the SRM $\{g_\alpha\}$.
\end{proof}

It is interesting to note that Theorem \ref{teo:gen-411} can also
be viewed as a generalization of Theorem 4.11 in \cite{bookEHN}.
In fact, this corresponds to the particular case in which
$s(\lambda)\doteq\lambda^\mu$ y $\rho(\alpha)\doteq\alpha^\mu$. If
moreover $\rho$ is optimal qualification then the reciprocal of
Theorem \ref{teo:gen-411} also holds. This is proved in the
following theorem.

\begin{thm}
If $\rho$ is optimal qualification of $\{g_\alpha\}$ and $s_\rho
\in \mathcal{S}$, then $\norm{(R_\alpha-T^\dag)y}=O(\rho(\alpha))$
for $\alpha\rightarrow 0^+$ if and only if $x^\dag \in
\mathcal{R}(s_\rho(T^\ast T)).$
\end{thm}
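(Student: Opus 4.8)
The statement is an ``if and only if'' that, under the hypothesis that $\rho$ is optimal qualification with $s_\rho \in \mathcal{S}$, characterizes the convergence rate $\norm{(R_\alpha-T^\dag)y}=O(\rho(\alpha))$ via membership in the source set $\mathcal{R}(s_\rho(T^\ast T))$. The plan is to assemble the proof from the machinery already developed, combining Theorem \ref{teo:gen-4.3}, Theorem \ref{teo:gen-411} and Theorem \ref{teo:cond-calopt}. The crucial observation is that under the stated hypotheses, the canonical source function $s_\rho$ defined in (\ref{s-rho}) is simultaneously the source function of a strong source-order pair and of an order-source pair, so that both the direct and the converse results apply with the \emph{same} function $s=s_\rho$.

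First I would invoke Theorem \ref{teo:cond-calopt}: since $\rho$ is optimal qualification and $s_\rho \in \mathcal{S}$, that theorem guarantees that $s_\rho$ satisfies both (\ref{eq:4.50}) and (\ref{eq:cond-calif}). Equivalently, by Proposition \ref{teo:cond-calif} the pair $(s_\rho,\rho)$ is a strong (hence weak) source-order pair for $\{g_\alpha\}$, and by (\ref{eq:4.50}) the pair $(\rho,s_\rho)$ is an order-source pair for $\{g_\alpha\}$. These two facts are exactly the hypotheses needed to feed into the two directional results.

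For the forward implication, suppose $x^\dag \in \mathcal{R}(s_\rho(T^\ast T))$. Since $(s_\rho,\rho)$ is a weak source-order pair for $\{g_\alpha\}$ and $\rho$ is weak qualification (optimal qualification being the strongest of the three levels, it is in particular weak), Theorem \ref{teo:gen-4.3} applied with $s=s_\rho$ yields $\norm{(R_\alpha-T^\dag)y}=O(\rho(\alpha))$ for $\alpha \rightarrow 0^+$. For the converse implication, suppose $\norm{(R_\alpha-T^\dag)y}=O(\rho(\alpha))$. Since $(\rho,s_\rho)$ is an order-source pair for $\{g_\alpha\}$, Theorem \ref{teo:gen-411} applied with $s=s_\rho$ gives $x^\dag \in \mathcal{R}(s_\rho(T^\ast T))$. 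This closes the equivalence.

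I do not expect a genuine obstacle here: the theorem is essentially a packaging result that reaps the benefit of having isolated $s_\rho$ as the canonical source function in Theorem \ref{teo:unica nu}. The only point requiring a moment of care is to confirm that the single function $s_\rho$ legitimately plays both roles at once — that the source function appearing in the direct result and the one appearing in the converse result may be taken to coincide — but this is precisely what optimal qualification (Definition \ref{def:calif-3} \textbf{\textit{iii)}}) asserts and what Theorem \ref{teo:cond-calopt} certifies for $s_\rho$. Hence the argument is a direct concatenation of the three cited results with no further estimates needed.
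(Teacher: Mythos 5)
Your proof is correct and follows essentially the same route as the paper: both directions are obtained by feeding the pair properties of $(s_\rho,\rho)$ into Theorem \ref{teo:gen-4.3} (for the source-set implies rate direction) and Theorem \ref{teo:gen-411} (for the rate implies source-set direction). The only cosmetic difference is that you extract those pair properties directly from Theorem \ref{teo:cond-calopt} together with Proposition \ref{teo:cond-calif}, whereas the paper cites Theorem \ref{teo:unica nu} --- which itself rests on Theorem \ref{teo:cond-calopt} --- so the underlying chain of reasoning is identical.
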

\begin{proof}
Let $\rho$ be optimal qualification of $\{g_\alpha\}$ and $s_\rho
\in \mathcal{S}$. Then by Theorem \ref{teo:unica nu},
$(\rho,s_\rho)$ is an order-source pair for $\{g_\alpha\}$ and
since $\norm{(R_\alpha-T^\dag)y}=O(\rho(\alpha))$ for
$\alpha\rightarrow 0^+$, it follows from Theorem \ref{teo:gen-411}
that $x^\dag \in \mathcal{R}(s_\rho(T^\ast T)).$

Conversely, if $x^\dag \in \mathcal{R}(s_\rho(T^\ast T))$, since
by virtue of Theorem \ref{teo:unica nu} $(s_\rho,\rho)$ is a
strong source-order pair, Theorem \ref{teo:gen-4.3} implies that
$\norm{(R_\alpha-T^\dag)y}=O(\rho(\alpha))$ for $\alpha\rightarrow
0^+$.
\end{proof}

An important result regarding existence and maximality of source
sets is the following: if $\rho$ is strong qualification of a SRM
and $s_\rho \in \mathcal{S}$ it follows from Proposition
\ref{teo:nu<nu-rho} that $\mathcal{R}(s_\rho(T^\ast T))$ is a
maximal source set where $\rho$ is order of convergence of the
regularization error. More precisely we have the following result.

\begin{thm}\label{teo:rango}
Let $\rho \in \mathcal{O}$ be strong qualification of
$\{g_\alpha\}$ such that $s_\rho \in \mathcal{S}$ and $s\in
\mathcal{S}$. If $(s,\rho)$ is a strong source-order pair for
$\{g_\alpha\}$ and $\mathcal{R}(s(T^\ast T))\supset
\mathcal{R}(s_\rho(T^\ast T))$ then $\mathcal{R}(s(T^\ast T))=
\mathcal{R}(s_\rho(T^\ast T))$.
\end{thm}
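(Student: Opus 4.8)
The plan is to establish the one inclusion that is missing, namely $\mathcal{R}(s(T^\ast T)) \subseteq \mathcal{R}(s_\rho(T^\ast T))$; combined with the hypothesis $\mathcal{R}(s(T^\ast T)) \supseteq \mathcal{R}(s_\rho(T^\ast T))$ this yields the asserted equality. The whole argument rests on first converting the variational information carried by the source-order pairs into a pointwise comparison of $s$ and $s_\rho$, and then transferring that comparison to the ranges through the functional calculus of the self-adjoint operator $A \doteq T^\ast T$.

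First I would record the two facts that the hypotheses supply. Since $\rho$ is strong qualification and $s_\rho \in \mathcal{S}$, Proposition \ref{teo:cond-calif} gives $0 < s_\rho(\lambda) < +\infty$ for every $\lambda > 0$, so $s_\rho$ is strictly positive on $(0,+\infty)$. Since $(s,\rho)$ is a strong source-order pair and $\rho$ is strong qualification, Proposition \ref{teo:nu<nu-rho} furnishes a constant $k > 0$ with
$$ s(\lambda) \leq k\, s_\rho(\lambda) \qquad \forall\; \lambda > 0. $$

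Next I would introduce the ratio $\phi(\lambda) \doteq s(\lambda)/s_\rho(\lambda)$ for $\lambda > 0$ and set $\phi(0) \doteq 0$. By the positivity of $s_\rho$ on $(0,+\infty)$ and the continuity of $s$ and $s_\rho$, the function $\phi$ is continuous on $(0,+\infty)$, and the pointwise inequality above shows $0 \leq \phi(\lambda) \leq k$ there; hence $\phi$ is bounded and measurable on $\sigma(A) \subseteq [0,\norm{T}^2]$ (i.e. $\phi \in \mathcal{M}_0$). The functional calculus then produces a bounded operator $\phi(A)$ that commutes with $s_\rho(A)$, and, since $\phi(\lambda)\, s_\rho(\lambda) = s(\lambda)$ for every $\lambda$ in the spectrum, one obtains the operator identity $s(A) = s_\rho(A)\,\phi(A)$. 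Consequently every $y \in \mathcal{R}(s(A))$, say $y = s(A)u$, can be rewritten as $y = s_\rho(A)\bigl(\phi(A) u\bigr) \in \mathcal{R}(s_\rho(A))$, which is precisely the inclusion $\mathcal{R}(s(T^\ast T)) \subseteq \mathcal{R}(s_\rho(T^\ast T))$ sought.

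I expect the only delicate point to be the passage from the pointwise bound to the range inclusion, i.e. the legitimacy of the factorization $s(A) = s_\rho(A)\,\phi(A)$. The difficulty is concentrated at $\lambda = 0$, where both $s$ and $s_\rho$ vanish and the ratio need not extend continuously; this is resolved by observing that $\phi$ need only be bounded and spectrally measurable (not continuous at $0$) for $\phi(A)$ to be a well-defined bounded operator and for the multiplicativity of the functional calculus to hold on $\sigma(A)$, the value assigned to $\phi(0)$ being irrelevant for the resulting operator. With this in hand, the two inclusions combine to give $\mathcal{R}(s(T^\ast T)) = \mathcal{R}(s_\rho(T^\ast T))$, as claimed.
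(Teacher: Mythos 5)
Your proof is correct and takes essentially the same route as the paper: both reduce the theorem to Proposition \ref{teo:nu<nu-rho}, which yields $s(\lambda)\le k\,s_\rho(\lambda)$ for every $\lambda>0$, and then deduce the missing inclusion $\mathcal{R}(s(T^\ast T))\subseteq\mathcal{R}(s_\rho(T^\ast T))$. The paper asserts this last implication without further detail, whereas you justify it explicitly via the bounded Borel multiplier $\phi=s/s_\rho$ and the factorization $s(T^\ast T)=s_\rho(T^\ast T)\,\phi(T^\ast T)$, which is a sound filling-in of the only nontrivial step.
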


\begin{proof}
Under the hypotheses of the Proposition \ref{teo:nu<nu-rho}, there
exists $k>0$ such that $s(\lambda)\leq k\,s_\rho(\lambda)$ for
every $\lambda>0$, which implies that $\mathcal{R}(s(T^\ast
T))\subset \mathcal{R}(s_\rho(T^\ast T))$.
\end{proof}

If moreover $\rho$ is optimal qualification the following stronger
result is obtained.

\begin{thm}
If $\rho \in \mathcal{O}$ is optimal qualification of $\{g_\alpha\}$
and $s_\rho \in \mathcal{S}$, then $\mathcal{R}(s_\rho(T^\ast T))$
is the only source set where $\rho$ is order of convergence of the
regularization error of $\{g_\alpha\}$.
\end{thm}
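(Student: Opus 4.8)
The plan is to reduce the uniqueness assertion to the uniqueness of the source function already established in Theorem \ref{teo:unica nu}, and then to translate the equivalence of source functions into equality of the associated source sets via the functional calculus for $T^\ast T$. First I would fix the working meaning of the phrase ``source set where $\rho$ is order of convergence of the regularization error'': a set of the form $\mathcal{R}(s(T^\ast T))$ with $s\in\mathcal{S}$ for which $(s,\rho)$ is a strong source-order pair and $(\rho,s)$ is an order-source pair for $\{g_\alpha\}$. This is exactly the situation in which both the direct Theorem \ref{teo:gen-4.3} and the converse Theorem \ref{teo:gen-411} apply, so that $\norm{(R_\alpha-T^\dag)y}=O(\rho(\alpha))$ holds precisely for $x^\dag\in\mathcal{R}(s(T^\ast T))$ and $\rho$ is the sharp order on that set (the strong condition ruling out that $O(1)$ can be improved to $o(1)$).

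With this understanding, let $\mathcal{R}(s(T^\ast T))$ be any source set where $\rho$ is order of convergence, so that $(s,\rho)$ is a strong source-order pair and $(\rho,s)$ is an order-source pair. Since $\rho$ is optimal qualification and $s_\rho\in\mathcal{S}$, Theorem \ref{teo:unica nu} guarantees that $s_\rho$ is, up to the equivalence $\approx$ of Definition \ref{def:order}, the unique function enjoying both of these properties. Hence $s\approx s_\rho$, i.e. there are constants $\varepsilon>0$ and $0<c_1<c_2<\infty$ with $c_1\,s_\rho(\lambda)\le s(\lambda)\le c_2\,s_\rho(\lambda)$ for all $\lambda\in(0,\varepsilon)$.

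It then remains to show that $s\approx s_\rho$ forces $\mathcal{R}(s(T^\ast T))=\mathcal{R}(s_\rho(T^\ast T))$. The idea is that only the behaviour of the source functions near $\lambda=0$ matters, because $\sigma(T^\ast T)\subset[0,\norm{T}^2]$ and $0$ is the only accumulation point that can cause unboundedness of the quotients $s/s_\rho$ and $s_\rho/s$. Indeed, on any interval $[\varepsilon,\norm{T}^2]$ both $s$ and $s_\rho$ are continuous and strictly positive, hence bounded above and below by positive constants, so the two quotients are bounded there; combined with the estimate $c_1\,s_\rho\le s\le c_2\,s_\rho$ on $(0,\varepsilon)$, this shows that $s$ and $s_\rho$ are comparable on all of $(0,\norm{T}^2]$. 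By the functional calculus this comparability yields $\mathcal{R}(s(T^\ast T))=\mathcal{R}(s_\rho(T^\ast T))$, since two ranges $\mathcal{R}(f(T^\ast T))$ and $\mathcal{R}(g(T^\ast T))$ coincide whenever $f$ and $g$ are comparable on $\sigma(T^\ast T)$. Since the source set was arbitrary, $\mathcal{R}(s_\rho(T^\ast T))$ is the only one on which $\rho$ is order of convergence, which is the assertion.

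The step I expect to be the main obstacle is the last one: justifying rigorously that comparability of $s$ and $s_\rho$ on the spectrum implies equality of the ranges of $s(T^\ast T)$ and $s_\rho(T^\ast T)$. This requires a spectral-multiplier argument, characterizing membership $x\in\mathcal{R}(s(T^\ast T))$ through a moment condition of the form $\int s(\lambda)^{-2}\,d\norm{E_\lambda x}^2<\infty$ and observing that such moment conditions are invariant under replacing $s$ by a function comparable to it on $\sigma(T^\ast T)$, since then $s^{-2}$ and $s_\rho^{-2}$ are comparable. The remaining reductions, namely the invocation of Theorem \ref{teo:unica nu} and the elementary comparability on compact subintervals of $(0,\norm{T}^2]$, are routine.
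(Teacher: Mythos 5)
Your proposal is correct and takes the same route as the paper: the paper's own proof consists precisely of the reduction to Theorem \ref{teo:unica nu}, which gives $s\approx s_\rho$ for any admissible source function $s$. Your additional work --- extending the equivalence at the origin to comparability on all of $\sigma(T^\ast T)$ via positivity and continuity of $s,s_\rho$ on compact subintervals, and then passing from comparability to $\mathcal{R}(s(T^\ast T))=\mathcal{R}(s_\rho(T^\ast T))$ by the spectral moment characterization (or, equivalently, by writing $s(T^\ast T)=h(T^\ast T)\,s_\rho(T^\ast T)$ with $h$ bounded and boundedly invertible) --- correctly fills in the step that the paper treats as immediate.
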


\begin{proof}
This result follows immediately from Theorem \ref{teo:unica nu}.
\end{proof}

\textbf{Examples:}

1. For the Tikhonov-Phillips regularization method the only source
set where $\rho(\alpha)=\alpha$ is optimal qualification is
$\mathcal{R}(s_\rho(T^\ast T))=\mathcal{R}(T^\ast T)$, since in
this case $s_\rho(\lambda)=\lambda$.

2. In Example 3 of Section 3 we saw that
$\rho(\alpha)=e^{-\frac{1}{\alpha}}$ is optimal qualification of
$\{g_\alpha\}$ and $s_\rho(\lambda)=\frac{\lambda}{1+\lambda}$.
Since  $\frac{\lambda}{1+\lambda}\approx \lambda$ it follows that
$\mathcal{R}(s_\rho(T^\ast T))=\mathcal{R}(T^\ast T)$ is the only
source set where $\rho$ is order of convergence of the
regularization error.

3. In Example 8 of the previous section, for $\rho(\alpha)=\alpha^k$
we have that $s_\rho(\lambda)=\lambda^{1/2}$. Since $\rho$  is
strong qualification of this SRM, it follows that
$\mathcal{R}(s_\rho(T^\ast T))=\mathcal{R}(T^\ast T)^{1/2}$ is a
maximal source set where $\rho(\alpha)$ is order of convergence of
the regularization error.

4. As pointed out at the end of Section 3, $\rho(\alpha)=
e^{-\frac{1}{\sqrt{\alpha}}}$ is weak qualification of Showalter's
method. It can be easily shown that for every $s \in \mathcal{S}$,
$(s,\rho)$ is a weak source-order pair for the method. Therefore,
it follows from Theorem \ref{teo:gen-4.3} that the regularization
error $\norm{R_\alpha y-x^\dag}$ has order of convergence
$\rho(\alpha)= e^{-\frac{1}{\sqrt{\alpha}}}$ whenever $x^\dag \in
\underset{s\in \,\mathcal{S}}{\bigcup}\mathcal{R}(s(T^\ast T))$.

5. Same as 4. happens with Landweber's method and
$\rho(\alpha)=(1-\mu\sqrt{\alpha}\,)^\frac 1
 \alpha$.

\section{Conclusions}

In this article we have extended the definition of qualification
for spectral regularization methods introduced by Math\'{e} and
Pereverzev in \cite{ref:Mathe-Pereverzev-2003}. This extension was
constructed bearing in mind the concept of qualification as the
optimal order of convergence of the regularization error that a
method can achieve (\cite{bookEHN}, \cite{Mathe2004},
\cite{ref:Mathe-Pereverzev-2003}, \cite{Vainikko-1982}). Three
different levels of generalized qualification were introduced:
weak, strong and optimal. In particular, the first of these levels
extends the definition introduced in
\cite{ref:Mathe-Pereverzev-2003} and a SRM having weak
qualification which is not qualification in the sense of
Definition \ref{def:calif-mathe} was shown. Sufficient conditions
for a SRM to have weak qualification were provided, as well as
necessary and sufficient conditions for a given order of
convergence to be strong or optimal qualification. Examples of all
three qualification levels were provided and the relationships
between them as well as with the classical concept of
qualification  and the qualification introduced in
\cite{ref:Mathe-Pereverzev-2003} were shown. Several SRMs having
generalized qualification in each one of the three levels and not
having classical qualification were presented. In particular, it
was shown that the well known TSVD, Showalter's and Landweber's
methods do have weak qualification. Finally  several implications
of this theory in the context of orders of convergence, converse
results and maximal source sets for inverse ill-posed problems,
were briefly shown. More detailed results on these implications
will appear in a forthcoming article.

\section*{References}

\end{document}